\newtheorem{corollary}{Corollary}
\newtheorem{remark}{Remark}
\newtheorem{definition}{Definition}
\newtheorem{theorem}{Theorem}
\newtheorem{lemma}{Lemma}
\newtheorem{question}{Question}
\newenvironment{proof}[1][Proof:]{\begin{trivlist}
\item[\hskip \labelsep {\bfseries #1}]}{\end{trivlist}}
\begin{document}

\title{Elements (functions) that are universal with respect to a minimal system}

\author{Zhirayr Avetisyan\footnote{Department of Mathematics: Analysis, Logic and Discrete Mathematics,
  Ghent University, Belgium, zhirayr.avetisyan@ugent.be} \and Martin Grigoryan\footnote{Chair of Higher Mathematics, Physics Faculty, Yerevan State University. gmarting@ysu.am} \and Michael Ruzhansky\footnote{Department of Mathematics: Analysis, Logic and Discrete Mathematics,
  Ghent University, Belgium, and
  School of Mathematical Sciences,
    Queen Mary University of London,
     United Kingdom. m.ruzhansky@ugent.be
}}

\maketitle

\begin{abstract}
We call an element $U$ conditionally universal for a sequential convergence space $\mathbf{\Omega}$ with respect to a minimal system $\{\varphi_n\}_{n=1}^\infty$ in a continuously and densely embedded Banach space $\mathcal{X}\hookrightarrow\mathbf{\Omega}$ if the partial sums of its phase-modified Fourier series is dense in $\mathbf{\Omega}$. We will call the element $U$ almost universal if the change of phases (signs) needs to be performed only on a thin subset of Fourier coefficients. In this paper we prove the existence of an almost universal element under certain assumptions on the system $\{\varphi_n\}_{n=1}^\infty$. We will call a function $U$ asymptotically conditionally universal in a space $L^1(\mathcal{M})$ if the partial sums of its phase-modified Fourier series is dense in $L^1(F_m)$ for an ever-growing sequence of subsets $F_m\subset\mathcal{M}$ with asymptotically null complement. Here we prove the existence of such functions $U$ under certain assumptions on the system $\{\varphi_n\}_{n=1}^\infty$. Moreover, we show that every integrable function can be slightly modified to yield such a function $U$.

In particular, we establish the existence of almost universal functions for $L^p([0,1])$, $p\in(0,1)$, and asymptotically conditionally universal functions for $L^1([0,1])$, with respect to the trigonometric system.
\end{abstract}

\section{Introduction}

Unsurprisingly, the term ``universal'' in mathematics is used in many different contexts and means different things depending on the context. In functional analysis and approximation theory, perhaps the most common use of the term is associated with the concept of a universal object, such as a function or a sequence, which is capable of representing a large class of objects in a certain way. There are many different ways or senses which a function, a sequence or a series may be universal in, and the study of existence and structure of such universal objects is an old and classical subject. A small selection of works in this context from different times can be found in \cite{Bir29,EpMu14,Fek14,GaGr21,GaGr18,Gro87,Gri99,GrSa16,Gri17,GeNa99,Gri20,Gri20a,Gri22,GrSa17,Iva90,Kro91} and \cite{Luh86,Men64,Men42,Mar35,Mac52,MeNe01,Nes96,Tal60,BGNP08}.

Perhaps the first important instance of a function with universal properties is due to Birkhoff \cite{Bir29}, who proved the existence of an entire function $f$ such that for an arbitrary entire function $g$ there exists a subsequence of natural numbers $\{n_k\}_{k=1}^\infty\subset\mathbb{N}$ such that the functional sequence $\{f(\cdot+n_k)\}_{k=1}^\infty$ converges to $g$ uniformly on compact subsets of $\mathbb{C}$. Hence, the sequence of additive translations $\{f(\cdot+n)\}_{n=1}^\infty$ of the function $f$ is dense in the space of entire functions endowed with the topology of compact convergence.

In \cite{Mar35} Marcinkiewicz proved that for any non-zero null sequence $\{h_n\}_{n=1}^\infty\subset\mathbb{R}\setminus\{0\}$, $h_n\xrightarrow[n\to\infty]{}0$, there exists a real continuous function $F_h\in C([0,1],\mathbb{R})$ having the following property: for every measurable function $g$ there exists a subsequence $\{n_k\}_{k=1}^\infty\subset\mathbb{N}$ such that
$$
\frac{F_h(x+h_{n_k})-F_h(x)}{h_{n_k}}\xrightarrow[k\to\infty]{}g(x),\quad\mbox{a.e.}\,x\in[0,1].
$$
This continuous function $F_h$ is called the universal primitive function w.r.t. the sequence $\{h_n\}_{n=1}^\infty$.

In \cite{Gro87} Gro{\ss}e-Erdmann proved the existence of a smooth function $f\in C^\infty(\mathbb{R},\mathbb{R})$, $f(0)=0$, with a locally uniformly universal in $C(\mathbb{R},\mathbb{R})$ Taylor series at $x_0=0$. That is, for every $g\in C(\mathbb{R},\mathbb{R})$ with $g(0)=0$ and $r>0$ there exists a subsequence of natural numbers $\{n_k\}_{k=1}^\infty\subset\mathbb{N}$ such that the corresponding partial sums of the Taylor series of $f$ converges to $g$ uniformly for $x\in[-r,r]$,
$$
\sup_{|x|<r}\left|\sum_{m=1}^{n_k}\frac{f^{(m)}(0)}{m!}x^m-g(x)\right|\xrightarrow[k\to\infty]{}0.
$$

In a similar vein, Luh proved in \cite{Luh86} that for every $r>0$ there exists a power series $\sum\limits_{n=0}^\infty a_nz^n$ with radius of convergence $r$ such that for every compact $K\Subset\{z\in\mathbb{C}\,\vline\,|z|>r\}$, with connected complement $K^\complement$, and every function $g\in C(K)\cap\mathrm{Hol}(\mathring{K})$ continuous in $K$ and holomorphic in the interior $\mathring{K}$, there exists a subsequence of natural numbers $\{n_k\}_{k=1}^\infty\subset\mathbb{N}$ such that the corresponding partial sums of the power series converges to $g$ uniformly in $K$,
$$
\sup_{z\in K}\left|\sum_{m=1}^{n_k}c_mz^m-g(z)\right|\xrightarrow[k\to\infty]{}0.
$$

The notion of a universal series appropriate to the present paper is due to Menshov \cite{Men64} and Talalian \cite{Tal60}. Since its inception the subject of universal series has grown considerably, with some of the influential contributions being \cite{EpMu14}, \cite{Gri99}, \cite{GeNa99}, \cite{Iva90}. In this paper we will be interested in functions (or more general elements) which have a universal Fourier series with respect to a given system, with universality being interpreted in several ways. We will introduce the setting and formulate the problem in its utmost generality in Definition \ref{OrthSerUniversalDef} and Definition \ref{FuncUniversalDef} below. In simple terms, we are given a Banach space $\mathcal{X}$ continuously and densely embedded in another space $\mathbf{\Omega}$, together with a minimal system $\{\varphi_n\}_{n=1}^\infty\subset\mathcal{X}$. The problem is to find an element $U\in\mathcal{X}$ such that its Fourier series with respect to $\{\varphi_n\}_{n=1}^\infty$ exhibits universal properties in the space $\mathbf{\Omega}$. In Theorem \ref{TheoremAbstract} we will show the existence of such an element $U$ under the assumption that $\mathbf{\Omega}$ is a homogeneous metric space and that the system $\{\varphi_n\}_{n=1}^\infty$ possesses what we call the universal approximation property in Definition \ref{UniApproxPropDef}. This element $U$ has the property of almost universality, namely, that, after a change of phase (sign) in a subset of terms of lower density zero, the partial sums of its Fourier series become dense in $\mathbf{\Omega}$.  We will discuss the ramifications of this theorem for the most common situation where $\mathcal{X}=L^1(\mathcal{M})$ and $\mathbf{\Omega}=L^p(\mathcal{M})$, $p\in(0,1)$, for a suitable measure space $\mathcal{M}$.

It is clear that the uniqueness of Fourier coefficients precludes any universality properties in the space $\mathbf{\Omega}=L^1(\mathcal{M})$ itself. The slightly weaker condition of asymptotic conditional universality in $L^1(\mathcal{M})$ is proven to be satisfied by a function $U$ in Theorem \ref{AsymTheorem1} under the assumption that the system $\{\varphi_n\}_{n=1}^\infty$ possesses what we call the asymptotic approximation property in Definition \ref{AsymApproxPropDef}. Moreover, in Theorem \ref{AsymTheorem2} it is shown that such asymptotically conditionally universal functions $U$ are in good supply. Namely, every integrable function can be modified on a subset of arbitrarily small positive measure to yield an asymptotically conditionally universal function. Here asymptotic conditional universality is understood as denseness, after a change of phases (signs), of partial sums of the Fourier series in each $L^1(F_m)$, where $\{F_m\}_{m=1}^\infty$ is a growing sequence of subsets $F_m\subset\mathcal{M}$ which in the limit covers almost all of $\mathcal{M}$, i.e., $|F_m^\complement|\to0$.

Finally, these general results are applied to the particular case of the trigonometric system on the interval $[-\pi,\pi]$, to establish the existence of almost universal functions in $L^p([-\pi,\pi])$, $p\in(0,1)$ (and hence also in $L^0([-\pi,\pi])$ and $L([-\pi,\pi])$; see the discussion including Remark \ref{L1MRemark} below), as well as the existence and abundance of asymptotically conditionally universal functions in $L^1([-\pi,\pi])$. These results appear to be new.

\section{The setting}

Let us introduce the key concepts of universality, in the context relevant to the present paper, in a somewhat abstract setting. We will need the following ingredients:

\begin{itemize}

\item $\mathbf{\Omega}$ a Hausdorff sequential convergence space

\item $\mathcal{X}$ a Banach space, with a continuous and dense embedding in $\mathcal{X}\hookrightarrow\mathbf{\Omega}$

\item $\{\varphi_n\}_{n=1}^\infty$ a minimal system in $\mathcal{X}$

\end{itemize}

We begin with the simple notion of a universal sequence.

\begin{definition} A sequence $\{\xi_n\}_{n=1}^\infty\subset\mathcal{X}$ is called \textbf{universal} in $\mathbf{\Omega}$ if subsequences of $\{\xi_n\}_{n=1}^\infty$ converge to every point of $\mathbf{\Omega}$,
$$
\left(\forall f\in\mathbf{\Omega}\right)\left(\exists\{N_k\}_{k=1}^\infty\subset\mathbb{N}\right)\quad\xi_{N_k}\xrightarrow[k\to\infty]{\mathbf{\Omega}}f.
$$
\end{definition}

In this paper we will be concerned with universality in the sense of convergence of certain sequences, therefore only sequential convergence will be relevant for our purposes. If the space $\mathbf{\Omega}$ is any convergence space then only its induced sequential convergence structure will figure in the present context. It is therefore no loss of generality to assume the space $\mathbf{\Omega}$ to be sequential. We have begun with the extreme generality of a Hausdorff sequential convergence space $\mathbf{\Omega}$. But per Proposition 1.7.15 in \cite{BeBu02}, the mere assumption of the very mild Urysohn property immediately makes $\mathbf{\Omega}$ topological. Nevertheless, one of the most natural sequential convergences, namely, the almost everywhere convergence on a measure space, is not topological. In order to not leave this convergence out of our scope from the very onset we will allow general Hausdorff sequential convergence spaces.

On the other hand, universality of a sequence in a first-countable topological space has the following continuity property with respect to continuous dense embeddings.

\begin{lemma}\label{UsualUnivContLemma} Let $\mathcal{X}\hookrightarrow\mathbf{\Omega}\hookrightarrow\mathbf{\Theta}$ be a sequence of continuous dense embeddings, where $\mathcal{X}$ is a Banach space and $\mathbf{\Omega}$, $\mathbf{\Theta}$ are Hausdorff first-countable topological spaces. Suppose that the sequence $\{\xi_n\}_{n=1}^\infty\subset\mathcal{X}$ is universal in $\mathbf{\Omega}$. Then it is universal in $\mathbf{\Theta}$.
\end{lemma}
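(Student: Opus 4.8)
The plan is to fix an arbitrary target $g\in\mathbf{\Theta}$ and construct a single subsequence of $\{\xi_n\}_{n=1}^\infty$ converging to $g$ in $\mathbf{\Theta}$, by combining three ingredients: the density of $\mathbf{\Omega}$ in $\mathbf{\Theta}$, the assumed universality of $\{\xi_n\}$ in $\mathbf{\Omega}$, and the continuity of the embedding $\mathbf{\Omega}\hookrightarrow\mathbf{\Theta}$. The first step is an approximation step: since $\mathbf{\Theta}$ is first-countable and Hausdorff and $\mathbf{\Omega}$ is dense in it, every $g\in\mathbf{\Theta}$ lies in the sequential closure of $\mathbf{\Omega}$, so I would select a sequence $\{h_j\}_{j=1}^\infty\subset\mathbf{\Omega}$ with $h_j\xrightarrow[j\to\infty]{\mathbf{\Theta}}g$. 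This reduces the problem to approximating each $h_j$, which lives in the intermediate space where universality is available.

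The second step is the transfer step. For each fixed $j$, universality of $\{\xi_n\}$ in $\mathbf{\Omega}$ yields a subsequence $\{\xi_{N^{(j)}_k}\}_{k=1}^\infty$ with $\xi_{N^{(j)}_k}\xrightarrow[k\to\infty]{\mathbf{\Omega}}h_j$. Because the embedding $\mathbf{\Omega}\hookrightarrow\mathbf{\Theta}$ is continuous, this same subsequence converges to $h_j$ in $\mathbf{\Theta}$ as well. Thus, for every $j$, I have at my disposal a subsequence of $\{\xi_n\}$ that approximates $h_j$ arbitrarily well in the $\mathbf{\Theta}$-topology, and the $h_j$ in turn approximate $g$.

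The third step, which I expect to be the main obstacle, is to merge this countable family of subsequences into one subsequence converging to $g$ itself via a diagonal extraction; this is precisely where the first-countability of $\mathbf{\Theta}$ is essential. I would fix a nested decreasing local base $V_1\supseteq V_2\supseteq\cdots$ at $g$ in $\mathbf{\Theta}$. Since $h_j\to g$, for each $m$ there is an index $j(m)$ with $h_{j(m)}\in V_m$; as $V_m$ is open it is itself a neighborhood of $h_{j(m)}$, so the transferred subsequence $\{\xi_{N^{(j(m))}_k}\}_k$ is eventually contained in $V_m$. Proceeding inductively on $m$, I would then choose an index $M_m$ from this subsequence with $M_m>M_{m-1}$ and $\xi_{M_m}\in V_m$, which is possible because the subsequence indices tend to infinity while the terms are eventually in $V_m$. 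The resulting strictly increasing indices $M_1<M_2<\cdots$ define a genuine subsequence, and since every neighborhood of $g$ contains some $V_{m_0}$ and hence all $V_m$ with $m\geq m_0$, we get $\xi_{M_m}\in V_m\subseteq V_{m_0}$ eventually, i.e. $\xi_{M_m}\xrightarrow[m\to\infty]{\mathbf{\Theta}}g$. As $g\in\mathbf{\Theta}$ was arbitrary, this establishes universality in $\mathbf{\Theta}$. The only delicate points to verify carefully are that the chosen indices are genuinely increasing and that the nesting of the $V_m$ correctly forces convergence; first-countability of $\mathbf{\Omega}$ and the Banach structure of $\mathcal{X}$ play no role beyond the ambient hypotheses.
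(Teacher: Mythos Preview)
Your argument is correct and matches the paper's proof essentially step for step: fix a nested countable neighborhood base at the target in $\mathbf{\Theta}$, approximate the target by a sequence from $\mathbf{\Omega}$, use universality in $\mathbf{\Omega}$ together with continuity of the embedding to push each approximant into the corresponding basic neighborhood, and then perform the diagonal extraction to build a single increasing subsequence. Your closing observation that neither the first-countability of $\mathbf{\Omega}$ nor the Banach structure of $\mathcal{X}$ is actually used is also accurate.
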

\begin{proof} Let $f\in\mathbf{\Theta}$ and a basis $\{O_\ell\}_{\ell=1}^\infty$ of open neighbourhoods of $f$ in $\mathbf{\Theta}$ be given. Assume without loss of generality that $O_{\ell+1}\subset O_\ell$ for $\forall\ell\in\mathbb{N}$. By denseness of $\mathbf{\Omega}\hookrightarrow\mathbf{\Theta}$, let $\{f_m\}_{m=1}^\infty\subset\mathbf{\Omega}$ be a sequence such that $f_m\xrightarrow[m\to\infty]{\mathbf{\Theta}}f$. For every $\ell\in\mathbb{N}$, take $m_\ell\in\mathbb{N}$ such that $f_{m_\ell}\in O_\ell$, so that $O_\ell$ is an open neighbourhood of $f_{m_\ell}$. By universality of $\{\xi_n\}_{n=1}^\infty$ in $\mathbf{\Omega}$ find a subsequence $\{N_k^\ell\}_{k=1}^\infty\subset\mathbb{N}$ such that
$\xi_{N_k^\ell}\xrightarrow[k\to\infty]{\mathbf{\Omega}}f_{m_\ell}$, which implies by continuity of $\mathbf{\Omega}\hookrightarrow\mathbf{\Theta}$ that $\xi_{N_k^\ell}\xrightarrow[k\to\infty]{\mathbf{\Theta}}f_{m_\ell}$. Then $\exists k_\ell\in\mathbb{N}$ such that $\forall k>k_\ell$ we have $\xi_{N_k^\ell}\in O_\ell$. Set $M_0\doteq1$ and define $\{M_\ell\}_{\ell=1}^\infty\subset\mathbb{N}$ inductively by $M_\ell\doteq N_k^\ell$ such that $k>k_\ell$ and $M_\ell>M_{\ell-1}$ for $\forall\ell\in\mathbb{N}$. Thus,
$$
\left(\forall\ell\in\mathbb{N}\right)\left(\forall\ell'>\ell\right)\quad\xi_{M_{\ell'}}\in O_{\ell'}\subset O_\ell,
$$
which shows that $\xi_{M_\ell}\xrightarrow[\ell\to\infty]{\mathbf{\Theta}}f$. This completes the proof. $\Box$
\end{proof}

In this paper we will concentrate on very particular sequences, namely, partial sums of series with respect to (w.r.t. hereinafter) the chosen minimal system $\{\varphi_n\}_{n=1}^\infty$. The minimality of $\{\varphi_n\}_{n=1}^\infty$ is equivalent to the existence of a system $\{c_n\}_{n=1}^\infty$ in the dual space $\mathcal{X}^*$ such that $(\{\varphi_n\}_{n=1}^\infty,\{c_n\}_{n=1}^\infty)$ is biorthonormal,
$$
c_n(\varphi_m)=\delta_{n,m},\quad\forall n,m\in\mathbb{N}.
$$
For a finite or infinite sequence of numbers $\{\alpha_n\}_{n=1}^N\subset\mathbb{C}$, $N\in\mathbb{N}_\infty=\mathbb{N}\cup\{\infty\}$, we will consider polynomials or series
$$
\sum_{n=1}^N\alpha_n\varphi_n
$$
w.r.t. the system $\{\varphi_n\}_{n=1}^\infty$. For an element $f\in\mathcal{X}$ we will call the elements of the sequence $\{c_n(f)\}_{n=1}^\infty$ the Fourier coefficients of $f$, and the corresponding series
$$
\sum_{n=1}^\infty c_n(f)\varphi_n
$$
the Fourier series of $f$ w.r.t. the system $\{\varphi_n\}_{n=1}^\infty$, regardless of its possible convergence either in $\mathcal{X}$ or $\mathbf{\Omega}$.

The concept of universality of a series can be summarized as follows. A series
$$
\sum_{n=1}^\infty\alpha_n\varphi_n
$$
will be called universal in $\mathbf{\Omega}$ if its partial sums,
$$
S_N\doteq\sum_{n=1}^N\alpha_n\varphi_n,\quad\forall N\in\mathbb{N},
$$
or certain modifications thereof, exhibit denseness properties in $\mathbf{\Omega}$. In other words, arbitrary elements of $\mathbf{\Omega}$ can be approximated with the help of a single fixed series $\sum_{n=1}^\infty\alpha_n\varphi_n$ in one way or another. Let us make these concepts more precise through definitions.

Let us choose a bounded subset $\mathbb{G}$, $1\in\mathbb{G}\subset\mathbb{C}$. The usual choice is $\mathbb{G}=\{\pm1\}$, in which case one speaks of signs, but we will allow for more flexible choices of $\mathbb{G}$ and will speak of $\mathbb{G}$-values.

\begin{definition}\label{OrthSerUniversalDef} Let $\mathbf{\Omega}$, $\mathcal{X}$ and $\{\varphi_n\}_{n=1}^\infty$ be as before. Then a series $\sum_{n=1}^\infty\alpha_n\varphi_n$ is said to be:

\begin{itemize}

\item[1.] \textbf{universal} in $\mathbf{\Omega}$ in the \textbf{usual} sense if its partial sums are universal in $\mathbf{\Omega}$, i.e.,
$$
\left(\forall f\in\mathbf{\Omega}\right)\left(\exists\{N_k\}_{k=1}^\infty\subset\mathbb{N}\right)\quad\sum_{n=1}^{N_k}\alpha_n\varphi_n\xrightarrow[k\to\infty]{}f
$$

\item[2.] \textbf{universal} in $\mathbf{\Omega}$ in the sense of $\mathbb{G}$-\textbf{values} (\textbf{signs}) if
$$
\left(\forall f\in\mathbf{\Omega}\right)\left(\exists\{\epsilon_n\}_{n=1}^\infty\right)\quad\left(\forall n\in\mathbb{N}\right)\,\epsilon_n\in\mathbb{G}\quad\wedge\quad\sum_{n=1}^\infty\epsilon_n\alpha_n\varphi_n=f
$$

\item[3.] \textbf{universal} in $\mathbf{\Omega}$ in the sense of \textbf{rearrangements} if
$$
\left(\forall f\in\mathbf{\Omega}\right)\left(\exists\sigma\in\mathrm{Aut}(\mathbb{N})\right)\quad\sum_{n=1}^\infty\alpha_{\sigma(n)}\varphi_{\sigma(n)}=f
$$

\end{itemize}
\end{definition}

\begin{remark} It follows from Lemma \ref{UsualUnivContLemma} that, if $\mathcal{X}\hookrightarrow\mathbf{\Omega}\hookrightarrow\mathbf{\Theta}$ are as therein, and the series $\sum_{n=1}^\infty\alpha_n\varphi_n$ is universal in $\mathbf{\Omega}$ in the usual sense, then it is such also in $\mathbf{\Theta}$.
\end{remark}

Definitions \ref{OrthSerUniversalDef} pertain to the universality of a series regardless of whether it is the Fourier series of an element $U\in\mathcal{X}$ or not. In case of the Fourier series of an $U\in\mathcal{X}$, any universality properties are attributed to the element $U$.

\begin{definition}\label{FuncUniversalDef} Let $\mathbf{\Omega}$, $\mathcal{X}$ and $\{\varphi_n\}_{n=1}^\infty$ be as before. Then we will call an element $U\in\mathcal{X}$:

\begin{itemize}

\item[1.] \textbf{universal} for $\mathbf{\Omega}$ w.r.t. $\{\varphi_n\}_{n=1}^\infty$ in the \textbf{usual} sense if the Fourier series $\sum_{n=1}^\infty c_n(U)\varphi_n$ is universal in $\mathbf{\Omega}$ in the usual sense

\item[2.] \textbf{universal} for $\mathbf{\Omega}$ w.r.t. $\{\varphi_n\}_{n=1}^\infty$ in the sense of $\mathbb{G}$-\textbf{values} (\textbf{signs}) if the Fourier series $\sum_{n=1}^\infty c_n(U)\varphi_n$ is universal in $\mathbf{\Omega}$ in the sense of $\mathbb{G}$-values (signs)

\item[3.] \textbf{universal} for $\mathbf{\Omega}$ w.r.t. $\{\varphi_n\}_{n=1}^\infty$ in the sense of \textbf{rearrangements} if the Fourier series $\sum_{n=1}^\infty c_n(U)\varphi_n$ is universal in $\mathbf{\Omega}$ in the sense of rearrangements

\item[4.] \textbf{conditionally universal} for $\mathbf{\Omega}$ w.r.t. $\{\varphi_n\}_{n=1}^\infty$ if
$$
\left(\exists\{\delta_n\}_{n=1}^\infty\right)\quad\left(\forall n\in\mathbb{N}\right)\delta_n\in\mathbb{G}
$$
and the series $\sum_{n=1}^\infty\delta_nc_n(U)\varphi_n$ is universal in $\mathbf{\Omega}$ in the usual sense

\item[5.] \textbf{almost universal} for $\mathbf{\Omega}$ w.r.t. $\{\varphi_n\}_{n=1}^\infty$ if
$$
\left(\exists\{\delta_n\}_{n=1}^\infty\right)\quad\left(\forall n\in\mathbb{N}\right)\delta_n\in\mathbb{G}\quad\wedge\quad\limsup_{n\to\infty}\frac{\#\left\{m\in\mathbb{N}\,\vline\quad m\le n\,\wedge\,\delta_m=1\right\}}{n}=1
$$
and the series $\sum_{n=1}^\infty\delta_nc_n(U)\varphi_n$ is universal in $\mathbf{\Omega}$ in the usual sense

\end{itemize}
\end{definition}

\begin{remark} It follows from Lemma \ref{UsualUnivContLemma} that, if $\mathcal{X}\hookrightarrow\mathbf{\Omega}\hookrightarrow\mathbf{\Theta}$ are as therein, and $U\in\mathcal{X}$ is universal for $\mathbf{\Omega}$ either in the usual sense or conditionally, or almost universal, then it is such also for $\mathbf{\Theta}$.
\end{remark}

\paragraph*{Discussion:} Let us examine here the relevance of some aspects of our setting above. First, $\mathcal{X}\hookrightarrow\mathbf{\Omega}$ must be dense, so that series have a chance of being universal in $\mathbf{\Omega}$. However, the topology of $\mathbf{\Omega}$ must be strictly weaker than that of $\mathcal{X}$, because convergence of a certain subsequence of partial sums of a series in the topology of $\mathcal{X}$ uniquely fixes the coefficients of the series as the Fourier coefficients of the limit function; this excludes any kind of universality of the series.

\begin{lemma}\label{FourierCoeffUniqueLemma} If any subsequence of the partial sums of the series $\sum_{n=1}^\infty\alpha_n\varphi_n$ converges to $f\in \mathcal{X}$ then $\sum_{n=1}^\infty\alpha_n\varphi_n$ is the Fourier series of the function $f$, i.e., $\alpha_n=c_n(f)$ for $n\in\mathbb{N}$.
\end{lemma}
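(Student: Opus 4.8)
The plan is to exploit the biorthonormality of the system together with the continuity of each coordinate functional $c_n$. Fix an arbitrary $n\in\mathbb{N}$. Because the subsequence $\{N_k\}_{k=1}^\infty$ tends to infinity, there is a $K\in\mathbb{N}$ with $N_k\ge n$ whenever $k\ge K$. For every such $k$ I would apply the bounded linear functional $c_n\in\mathcal{X}^*$ to the partial sum $S_{N_k}$ and invoke linearity together with the relation $c_n(\varphi_m)=\delta_{n,m}$ to collapse the finite sum to a single term,
$$
c_n(S_{N_k})=\sum_{m=1}^{N_k}\alpha_m\,c_n(\varphi_m)=\alpha_n,
$$
so that the scalar sequence $\{c_n(S_{N_k})\}_{k\ge K}$ is in fact constant and equal to $\alpha_n$.

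Next I would pass to the limit in $k$. Since $c_n$ is continuous and $S_{N_k}\xrightarrow[k\to\infty]{\mathcal{X}}f$ by hypothesis, it follows that $c_n(S_{N_k})\xrightarrow[k\to\infty]{}c_n(f)$. Comparing with the previous step, where the left-hand side is eventually the constant $\alpha_n$, forces $c_n(f)=\alpha_n$. As $n\in\mathbb{N}$ was arbitrary, this yields $\alpha_n=c_n(f)$ for every $n$, which is precisely the assertion that $\sum_{n=1}^\infty\alpha_n\varphi_n$ is the Fourier series of $f$.

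There is essentially no genuine obstacle here: the entire argument rests on the two facts that the functionals $c_n$ are continuous, guaranteed by their membership in $\mathcal{X}^*$ (equivalently, by the minimality of $\{\varphi_n\}_{n=1}^\infty$), and that the convergence is taken in the norm topology of $\mathcal{X}$ rather than in the weaker convergence of $\mathbf{\Omega}$. The only point deserving a moment's care is to restrict to indices $k$ large enough that $N_k\ge n$, so that the partial sum actually contains the term $\alpha_n\varphi_n$; this is automatic since $N_k\to\infty$. It is exactly this lemma that justifies, in the definitions of universality, the insistence that the ambient convergence occur in a space $\mathbf{\Omega}$ whose topology is strictly weaker than that of $\mathcal{X}$.
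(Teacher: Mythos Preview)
Your proof is correct and is essentially identical to the paper's own argument: both fix an index, apply the continuous functional $c_n$ to the partial sums $S_{N_k}$ once $N_k\ge n$, use biorthonormality to get $c_n(S_{N_k})=\alpha_n$, and then pass to the limit using continuity of $c_n$ (the paper phrases this last step via the bound $|\alpha_m-c_m(f)|\le\|c_m\|_{\mathcal{X}^*}\|S_{N_k}-f\|_{\mathcal{X}}$, which is the same thing).
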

\begin{proof} Let $\{N_k\}_{k=1}^\infty\subset\mathbb{N}$ be a subsequence such that
$$
\left\|\sum_{n=1}^{N_k}\alpha_n\varphi_n-f\right\|_\mathcal{X}\xrightarrow[k\to\infty]{}0.
$$
Take any $m\in\mathbb{N}$ and let $N_k\ge m$. Then
$$
|\alpha_m-c_m(f)|=\left|c_m\left(\sum_{n=1}^{N_k}\alpha_n\varphi_n-f\right)\right|\le\|c_m\|_{\mathcal{X}^*}\cdot\left\|\sum_{n=1}^{N_k}\alpha_n\varphi_n-f\right\|_\mathcal{X}\xrightarrow[k\to\infty]{}0,
$$
which shows that $\alpha_m=c_m(f)$. $\Box$
\end{proof}

Second, the system has to be minimal in order that we can speak of unambiguously defined Fourier coefficients $c_n(f)$ of an element $f\in\mathcal{X}$. Finally, the necessity to consider various alternative kinds of universality arises mainly from the fact that for some of the most important classical systems it is known that functions universal in the usual sense do not exist (e.g., Remark \ref{UnivNotExistRemark} below). However, functions universal in various senses defined above have been shown to exist in different classical systems \cite{GaGr21,GaGr18,GrSa16,Gri17,Gri20,Gri20a,Gri22,GrSa17}. Nevertheless, the following questions remain open.

\begin{question} Does there exist a non-trivial triple $\mathbf{\Omega}$, $\mathcal{X}$ and $\{\varphi_n\}_{n=1}^\infty$ which admits a universal element $U\in\mathcal{X}$ for $\mathbf{\Omega}$ w.r.t. $\{\varphi_n\}_{n=1}^\infty$ in the usual sense?
\end{question}
\begin{question} Does there exist a non-trivial triple $\mathbf{\Omega}$, $\mathcal{X}$ and $\{\varphi_n\}_{n=1}^\infty$ which admits a universal element $U\in\mathcal{X}$ for $\mathbf{\Omega}$ w.r.t. $\{\varphi_n\}_{n=1}^\infty$ in the sense of rearrangements?
\end{question}
The non-triviality means that the topology of $\mathbf{\Omega}$ is not too weak. Good choices would be $\mathcal{X}=L^1(\mathcal{M})$ and $\mathbf{\Omega}=L^p(\mathcal{M})$, $p\in[0,1)$, for a finite separable diffuse measure space $\mathcal{M}$, such as $\mathcal{M}=[0,1]$ (see the discussion in Section \ref{Section:Lebesgue}). To the best of our knowledge, even for $\mathcal{M}=[0,1]$ the above questions are open.

\section{Universality in metric spaces}

In this section we will produce elements $U\in\mathcal{X}$ which are almost universal for a given minimal system, under certain technical assumptions on the latter.

\begin{definition}\label{UniApproxPropDef} Let $(\mathbf{\Omega},d)$ be a metric space and $\mathcal{X}$ a Banach space, with a continuous dense embedding $\mathcal{X}\hookrightarrow\mathbf{\Omega}$. Let further $(\{\varphi_n\}_{n=1}^\infty,\{c_n\}_{n=1}^\infty)$ be a biorthonormal system in $(\mathcal{X},\mathcal{X}^*)$. We will say that the system $\{\varphi_n\}_{n=1}^\infty$ possesses the \textbf{universal approximation property} in $\mathbf{\Omega}$ if for a dense subset $\mathcal{D}\subset\mathcal{X}$ we have
$$
\left(\forall f\in\mathcal{D}\right)\left(\forall\epsilon,\delta>0\right)\left(\forall n_0\in\mathbb{N}\right)\left(\exists N\in\mathbb{N}\right)\left(\exists \{\alpha_n\}_{n=n_0}^N\subset\mathbb{C}\right)\left(\exists \{\delta_n\}_{n=n_0}^N\subset\mathbb{G}\right)
$$
such that:
\begin{itemize}

\item[1.] $\|H\|_\mathcal{X}<\epsilon$, $H\doteq\sum_{n=n_0}^N\alpha_n\varphi_n$

\item[2.] $d(f,Q)<\delta$, $Q\doteq\sum_{n=n_0}^N\delta_n\alpha_n\varphi_n$

\end{itemize}
\end{definition}

The first important result of the present work is the following theorem.

\begin{theorem}\label{TheoremAbstract} Let $(\mathbf{\Omega},d)$ be a separable Abelian metric group with an invariant metric, $(\mathcal{X},\|\cdot\|)$ a Banach space with a continuous dense additive embedding $\mathcal{X}\hookrightarrow\mathbf{\Omega}$, and $(\{\varphi_n\}_{n=1}^\infty,\{c_n\}_{n=1}^\infty)$ a bi-orthonormal system in $(\mathcal{X},\mathcal{X}^*)$. If the system $\{\varphi_n\}_{n=1}^\infty$ possesses the universal approximation property in $\mathbf{\Omega}$ then $\exists U\in\mathcal{X}$ such that $U$ is almost universal for $\mathbf{\Omega}$ w.r.t. $\{\varphi_n\}_{n=1}^\infty$.
\end{theorem}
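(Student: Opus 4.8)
The plan is to construct $U$ as the sum of a rapidly convergent series of finite blocks, each block being a "correction" polynomial furnished by the universal approximation property, arranged so that (i) the series converges in $\mathcal{X}$ to a genuine element $U$, (ii) after applying the sign pattern $\{\delta_n\}$ accumulated from the blocks, a subsequence of partial sums becomes dense in $\mathbf{\Omega}$, and (iii) the set of indices where $\delta_n\ne1$ has lower density zero. Since $\mathbf{\Omega}$ is separable, fix a countable dense subset $\{g_j\}_{j=1}^\infty\subset\mathbf{\Omega}$, and arrange a sequence $\{f_k\}_{k=1}^\infty\subset\mathcal{D}$ (the dense subset from Definition \ref{UniApproxPropDef}) in which every $f\in\mathcal{D}$ — and hence, by density and the triangle inequality, every target in $\mathbf{\Omega}$ — is approached infinitely often; this is the standard enumeration trick that lets a single element be universal for all of $\mathbf{\Omega}$ simultaneously.

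First I would set up the inductive block construction. Having built blocks up to some index $n_0-1$, apply the universal approximation property to the current target $f_k$ with tolerances $\epsilon_k,\delta_k>0$ chosen in advance as summable sequences (say $\epsilon_k=2^{-k}$), starting index $n_0$, to obtain $N_k$, coefficients $\{\alpha_n\}_{n=n_0}^{N_k}$, and signs $\{\delta_n\}_{n=n_0}^{N_k}$ with $\|H_k\|_\mathcal{X}<\epsilon_k$ and $d(f_k,Q_k)<\delta_k$, where $H_k=\sum_{n=n_0}^{N_k}\alpha_n\varphi_n$ and $Q_k=\sum_{n=n_0}^{N_k}\delta_n\alpha_n\varphi_n$. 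Define $U\doteq\sum_{k=1}^\infty H_k$. Because $\sum_k\|H_k\|_\mathcal{X}<\infty$ and $\mathcal{X}$ is a Banach space, the series converges in $\mathcal{X}$, so $U\in\mathcal{X}$. By continuity of the functionals $c_n$, the coefficient $c_n(U)$ equals exactly the single $\alpha_n$ from the unique block containing index $n$ — here the blocks occupy disjoint consecutive index ranges, so no cancellation across blocks occurs and the Fourier coefficients of $U$ are precisely the $\alpha_n$. The phase-modified partial sum at $N=N_k$ then equals $\sum_{j\le k}Q_j$, whose distance to $f_k$ in $\mathbf{\Omega}$ I control: the block $Q_k$ is within $\delta_k$ of $f_k$, and the earlier blocks $\sum_{j<k}Q_j$ are a fixed element whose contribution I must also neutralize. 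This is where the group structure enters.

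The main obstacle, and the point requiring care, is that each partial sum carries the baggage of all previous blocks $\sum_{j<k}Q_j$, so I cannot simply approximate $f_k$ by the single block $Q_k$; I must approximate the \emph{shifted} target $f_k\ominus\sum_{j<k}Q_j$. This is exactly why $\mathbf{\Omega}$ is assumed to be an Abelian metric group with invariant metric: the invariance gives $d\bigl(\sum_{j\le k}Q_j,\,f_k\bigr)=d\bigl(Q_k,\,f_k\ominus\sum_{j<k}Q_j\bigr)$, so at stage $k$ I feed the universal approximation property the target $f_k\ominus\sum_{j<k}Q_j$ (after noting that any element of $\mathbf{\Omega}$ can be approximated in $d$ by an element of $\mathcal{D}$, since $\mathcal{X}\hookrightarrow\mathbf{\Omega}$ densely and $\mathcal{D}$ is dense in $\mathcal{X}$). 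Since the enumeration ensures every element of $\mathbf{\Omega}$ is the limit of infinitely many $f_k$ with $\delta_k\to0$, a diagonal subsequence of the phase-modified partial sums $\sum_{n=1}^{N_k}\delta_nc_n(U)\varphi_n$ converges in $\mathbf{\Omega}$ to any prescribed target, giving conditional universality.

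Finally I must verify the density condition on the sign set, i.e. that $\limsup_{n\to\infty}\frac{1}{n}\#\{m\le n:\delta_m=1\}=1$, which upgrades conditional universality to \emph{almost} universality. The freedom here is that the universal approximation property constrains only the signs inside each block, while between consecutive blocks I am free to insert arbitrarily long runs of pure identity coefficients $\alpha_n=0$ with $\delta_n=1$ (these add nothing to either $H_k$ or $Q_k$ and cost nothing in norm). By making these padding gaps grow fast enough relative to the block lengths $N_k-n_0$, I force the proportion of indices $m\le n$ carrying a nontrivial sign to have lower density zero, equivalently $\limsup_n\frac1n\#\{m\le n:\delta_m=1\}=1$ along the right subsequence of $n$. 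The bookkeeping to make the gap-growth dominate the cumulative block lengths is the only genuinely delicate estimate; everything else is the standard block-series universality argument adapted to the group setting.
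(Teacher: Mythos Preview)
Your proposal is correct and follows essentially the same route as the paper's proof: build $U$ as a telescoping sum of small blocks $H_k$ produced by the universal approximation property, feed the property the \emph{shifted} target $f_k-\sum_{j<k}Q_j$ (approximated by an element of $\mathcal{D}$) so that invariance of $d$ gives $d\bigl(\sum_{j\le k}Q_j,f_k\bigr)<\delta_k$, identify $c_n(U)=\alpha_n$ by continuity of the $c_n$, and pad the inter-block gaps with zero coefficients and $\delta_n=1$ to force the required upper-density condition. The paper implements the gap-padding concretely via $M_{k+1}\doteq 2^k M_k^*$, and uses a single dense sequence in $\mathbf{\Omega}$ both as the list of targets $f_k$ and as the pool from which the approximant $f_{m_k}\approx f_k-\sum_{j<k}Q_j$ is drawn, but these are organizational choices rather than substantive differences.
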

\begin{proof} By separability of $\Omega$ choose a dense sequence $\{f_m\}_{m=1}^\infty\subset\Omega$. Using mathematical induction, we construct the sequences of polynomials $\{H_k\}_{k=1}^\infty$ and $\{Q_k\}_{k=1}^\infty$ in the system $\{\varphi_n\}_{n=1}^\infty$ as follows. Set $M_1\doteq1$. On each step $k\in\mathbb{N}$, using the denseness of the sequence $\{f_m\}_{m=1}^\infty$, find $m_k\in\mathbb{N}$ such that
\begin{equation}
d\left(f_k-\sum_{\ell=1}^{k-1}Q_\ell\,,f_{m_k}\right)<\frac1{2k}.\label{Est12k1}
\end{equation}
Then apply the universal approximation property in Definition \ref{UniApproxPropDef} with substitution
$$
f\leftarrow f_{m_k},\quad\epsilon\leftarrow\frac1{2^k},\quad\delta\leftarrow\frac1{2k},\quad n_0\leftarrow M_k
$$
to obtain the number $M_k^*\in\mathbb{N}+M_k$, the tuples
$$
\left\{\alpha^{(k)}_n\right\}_{n=M_k}^{M_k^*}\subset\mathbb{C},\quad\left\{\delta^{(k)}_n\right\}_{n=M_k}^{M_k^*}\subset\mathbb{G},
$$
and polynomials
$$
H_k\doteq\sum_{n=M_k}^{M_k^*}\alpha^{(k)}_n\varphi_n,\quad Q_k\doteq\sum_{n=M_k}^{M_k^*}\delta^{(k)}_n\alpha^{(k)}_n\varphi_n,
$$
such that
\begin{equation}
\|H_k\|_\mathcal{X}\le\frac1{2^k},\quad d(f_{m_k},Q_k)<\frac1{2k}.\label{HkQkEst}
\end{equation}
Finally, set $M_{k+1}\doteq2^kM_k^*$. Note from (\ref{Est12k1}) and (\ref{HkQkEst}) that the sequence $\{Q_k\}_{k=1}^\infty$ constructed this way has the property that
\begin{equation}
d\left(f_k,\sum_{\ell=1}^kQ_\ell\right)=d\left(f_k-\sum_{\ell=1}^{k-1}Q_\ell\,,Q_k\right)\le d\left(f_k-\sum_{\ell=1}^{k-1}Q_\ell\,,f_{m_k}\right)+d(f_{m_k},Q_k)<\frac1k,\label{ApproxEq}
\end{equation}
where we used the invariance of the metric $d$ in the first step.

Define now the sequences $\{\alpha_n\}_{n=1}^\infty\subset\mathbb{C}$ and $\{\delta_n\}_{n=1}^\infty\subset\mathbb{G}$ by
$$
\alpha_n\doteq\begin{cases}
0 & \mbox{for}\quad n\in\bigcup\limits_{k=1}^\infty[M_k^*,M_{k+1})\\
\alpha^{(k)}_n & \mbox{for}\quad n\in[M_k,M_k^*],\quad k\in\mathbb{N}
\end{cases},\quad\delta_n\doteq\begin{cases}
1 & \mbox{for}\quad n\in\bigcup\limits_{k=1}^\infty[M_k^*,M_{k+1})\\
\delta^{(k)}_n & \mbox{for}\quad n\in[M_k,M_k^*],\quad k\in\mathbb{N}
\end{cases}.
$$
Observe that
$$
\frac{\#\{n\in\mathbb{N}\,\vline\quad n\le M_{k+1},\quad\delta_n=1\}}{M_{k+1}}\ge\frac{M_{k+1}-M_k^*}{M_{k+1}}=1-\frac1{2^k},\quad\forall k\in\mathbb{N},
$$
so that
\begin{equation}
\limsup_{n\to\infty}\frac{\#\{m\in\mathbb{N}\,\vline\quad m\le n\,\wedge\,\delta_m=1\}}{n}=1.\label{UpperDensity}
\end{equation}
Set
$$
U\doteq\sum_{k=1}^\infty H_k=\sum_{n=1}^\infty\alpha_n\varphi_n.
$$
From (\ref{HkQkEst}) it follows that
$$
\sum_{k=1}^\infty\|H_k\|_\mathcal{X}<1,
$$
therefore $U\in\mathcal{X}$. Moreover,
$$
\lim_{k\to\infty}\left\|\sum_{n=1}^{M_k^*}\alpha_n\varphi_n-U\right\|_\mathcal{X}=\lim_{k\to\infty}\left\|\sum_{\ell=k+1}^\infty H_\ell\right\|_\mathcal{X}=0,
$$
so that by Lemma \ref{FourierCoeffUniqueLemma} the series $\sum_{n=1}^\infty\alpha_n\varphi_n$ is the Fourier series of the element $U$,
$$
\alpha_n=c_n(U),\quad\forall n\in\mathbb{N}.
$$
Take any $f\in\mathbf{\Omega}$ and by denseness find a subsequence $\{f_{m_q}\}_{q=1}^\infty\subset\{f_m\}_{m=1}^\infty$ such that $f_{m_q}\xrightarrow[q\to\infty]{\mathbf{\Omega}}f$. Then using (\ref{ApproxEq}) we find that
$$
d\left(f,\sum_{n=1}^{M_{m_q}^*}\delta_n\alpha_n\varphi_n\right)\le d\left(f,f_{m_q}\right)+d\left(f_{m_q},\sum_{n=1}^{M_{m_q}^*}\delta_n\alpha_n\varphi_n\right)
$$
$$
=d\left(f,f_{m_q}\right)+d\left(f_{m_q},\sum_{k=1}^{m_q}Q_k\right)<d\left(f,f_{m_q}\right)+\frac1{m_q}\xrightarrow[q\to\infty]{}0,
$$
which shows that the element $U$ is almost universal for $\mathbf{\Omega}$. $\Box$
\end{proof}

\begin{remark} The assumption of the additive structure in $\mathbf{\Omega}$ and invariance of the metric $d$ were made for technical purposes. The invariance can be easily relaxed to, say, $d(\cdot+h,\cdot\cdot+h)<A\cdot d(\cdot,\cdot\cdot)$ for all $h\in\mathcal{X}$. It is possible that these assumptions can be dropped altogether.
\end{remark}

\section{On universality in Lebesgue spaces}\label{Section:Lebesgue}

Let us now restrict to universality in spaces $\mathbf{\Omega}=M(\mathcal{M})$ (all measurable functions) or $\mathbf{\Omega}=L^p(\mathcal{M})$, $p\in[0,+\infty)$, with $\mathcal{X}=L^q(\mathcal{M})$, $q\ge1$, $p<q$, for a finite separable diffuse measure space $\mathcal{M}=(\mathcal{M},\Sigma,|\cdot|)$. First we note that since $L^r(\mathcal{M})\hookrightarrow L^q(\mathcal{M})$ is continuous and dense for $1\le q\le r$, a minimal system in $L^r(\mathcal{M})$ contains a minimal system in $L^q(\mathcal{M})$. Therefore, without loss of generality we can assume $q=\max\{1,p\}$. However, if $p\ge 1$ then $\mathbf{\Omega}=\mathcal{X}=L^p(\mathcal{M})$, and by Lemma \ref{FourierCoeffUniqueLemma} there cannot exist any universal properties. Thus, we conclude that $p\in[0,1)$ and $q=1$ is the correct setting.

Speaking of $M(\mathcal{M})$ and $L^0(\mathcal{M})$, there are two commonly used natural convergences in these spaces: convergence almost everywhere and convergence in measure. Convergence almost everywhere is not topological and is a typical representative of a sequential convergence space \cite{BeBu02}. Convergence in measure is the topologization or topological modification of convergence almost everywhere, and is in fact metrizable with a metric that is invariant with respect to the additive structure. Convergence in measure is therefore the more natural structure in $L^0(\mathcal{M})$ when viewed as a limit case of $L^p(\mathcal{M})$, $p>0$.

In fact, as far as universal functions in the usual sense, conditionally universal or almost universal functions are concerned, universality in $M(\mathcal{M})$ and $L^0(\mathcal{M})$ is independent of whether the almost everywhere convergence or the convergence in measure is chosen. Indeed, convergence almost everywhere of a sequence implies convergence in measure, while convergence in measure of a sequence implies almost everywhere convergence of a subsequence. And since the universality of a functional series in the usual sense is based on subsequences, it does not depend on the choice between these two convergences. And once there is no difference in this choice, it is more convenient to choose the metrizable convergence in measure.

On the other hand, if $L^0(\mathcal{M})$ and $M(\mathcal{M})$ are equipped with the invariant metric topology of convergence in measure, then $\mathcal{X}\hookrightarrow L^p(\mathcal{M})\hookrightarrow L^0(\mathcal{M})$ and $\mathcal{X}\hookrightarrow L^p(\mathcal{M})\hookrightarrow M(\mathcal{M})$ satisfy the assumptions of Lemma \ref{UsualUnivContLemma} for $p>0$. Therefore a function that is universal in $L^p(\mathcal{M})$ in any of these three senses is automatically universal in the same sense in $L^0(\mathcal{M})$ and $M(\mathcal{M})$. We conclude with the following

\begin{remark}\label{L1MRemark} For a function $U\in L^1(\mathcal{M})$, the universality in $L^0(\mathcal{M})$ and $M(\mathcal{M})$ in the usual sense, conditional, and almost universality are independent of whether convergence almost everywhere is assumed or convergence in measure. If $U$ is universal in any of these senses in some $L^p(\mathcal{M})$, $p\in(0,1)$, then it is universal in the same sense in any $L^q(\mathcal{M})$, $q\in[0,p)$ and in $M(\mathcal{M})$.
\end{remark}

However, universality in the sense of $\mathbb{G}$-values (signs) or rearrangements do not work with subsequences, and such convenient reductions are not available. These kinds of universality may potentially depend on the choice of convergence in $L^0(\mathcal{M})$ and $M(\mathcal{M})$.

Coming back to universality in the usual sense, conditional and almost universality, we see that the optimal strategy is to try to prove universality for a pair $\mathcal{X}=L^1(\mathcal{M})\hookrightarrow L^p(\mathcal{M})=\mathbf{\Omega}$ with $p\in(0,1)$ as large as possible. By Theorem \ref{TheoremAbstract}, it is sufficient to show that the system $\{\varphi_n\}_{n=1}^\infty$ has the universal approximation property in $\mathbf{\Omega}=L^p(\mathcal{M})$. Lemma \ref{TrigUniApproxPropLemma} below establishes this property for the trigonometric system, but for other systems the validity of this property is unknown.

\section{Asymptotic universality in $L^1(\mathcal{M})$}

While the strength of the Banach space structure in Lemma \ref{FourierCoeffUniqueLemma} precludes universality in $L^1(\mathcal{M})$ in the sense defined before, a weaker, asymptotic form of universality can be established under favourable conditions. Here $\mathcal{M}=(\mathcal{M},\Sigma,|\cdot|)$ is a finite separable diffuse measure space, as before.

\begin{definition}\label{FuncAsymUniversalDef} Let $\{\varphi_n\}_{n=1}^\infty\subset L^1(\mathcal{M})$ be a minimal system. We will call an element $U\in L^1(\mathcal{M})$:

\begin{itemize}

\item[1.] \textbf{asymptotically universal} for $L^1(\mathcal{M})$ w.r.t. $\{\varphi_n\}_{n=1}^\infty$ in the \textbf{usual} sense if there exists a sequence of subsets $\{F_m\}_{m=1}^\infty\subset2^\mathcal{M}$, with
\begin{equation}
F_1\subset F_2\subset\ldots\subset\mathcal{M},\quad\lim\limits_{m\to\infty}\left|F_m^\complement\right|=0,
\end{equation}
such that
$$
\left(\forall f\in L^1(\mathcal{M})\right)\left(\exists\{N_q\}_{q=1}^\infty\subset\mathbb{N}\right)\left(\forall m\in\mathbb{N}\right)\quad\lim\limits_{q\to\infty}\int\limits_{F_m}\left|\sum_{n=1}^{N_q}c_n(U)\varphi_n(x)-f(x)\right|dx=0
$$

\quad

\item[2.] \textbf{asymptotically conditionally universal} for $L^1(\mathcal{M})$ w.r.t. $\{\varphi_n\}_{n=1}^\infty$ if there exist a sequence of $\mathbb{G}$-values (signs) $\{\delta_n\}_{n=1}^\infty$, $(\forall n\in\mathbb{N})\,\delta_n\in\mathbb{G}$, and a sequence of subsets $\{F_m\}_{m=1}^\infty\subset2^\mathcal{M}$, with
\begin{equation}
F_1\subset F_2\subset\ldots\subset\mathcal{M},\quad\lim\limits_{m\to\infty}\left|F_m^\complement\right|=0,\label{F_mDef}
\end{equation}
such that
$$
\left(\forall f\in L^1(\mathcal{M})\right)\left(\exists\{N_q\}_{q=1}^\infty\subset\mathbb{N}\right)\left(\forall m\in\mathbb{N}\right)\quad\lim\limits_{q\to\infty}\int\limits_{F_m}\left|\sum_{n=1}^{N_q}\delta_nc_n(U)\varphi_n(x)-f(x)\right|dx=0
$$

\end{itemize}
\end{definition}

In this paper we will construct asymptotically conditionally universal functions under certain assumptions on the minimal system $\{\varphi_n\}_{n=1}^\infty$ as described in the definition below.

\begin{definition}\label{AsymApproxPropDef} Let $\{\varphi_n\}_{n=1}^\infty\subset L^1(\mathcal{M})$ be a minimal system. We will say that the system $\{\varphi_n\}_{n=1}^\infty$ possesses the \textbf{asymptotic approximation property} in $L^1(\mathcal{M})$ if for a dense subset $\mathcal{D}\subset L^1(\mathcal{M})$ and a positive number $C>0$ we have
\begin{eqnarray}
\left(\forall f\in\mathcal{D}\right)\left(\forall\epsilon,\delta,\sigma>0\right)\left(\forall n_0\in\mathbb{N}\right)\nonumber\\
\left(\exists N\in\mathbb{N}+n_0\right)\left(\exists \{\alpha_n\}_{n=n_0}^N\subset\mathbb{C}\right)\left(\exists \{\delta_n\}_{n=n_0}^N\subset\mathbb{G}\right)\left(\exists E\subset\mathcal{M}\right)
\end{eqnarray}
such that:
\begin{itemize}

\item[1.] $\left|E^\complement\right|<\sigma$

\item[2.] $\|H\|_1<\epsilon$, $H\doteq\sum_{n=n_0}^N\alpha_n\varphi_n$

\item[3.] $\int\limits_E|f(x)-Q(x)|dx<\delta$, $Q\doteq\sum_{n=n_0}^N\delta_n\alpha_n\varphi_n$

\item[4.] $\int\limits_{E^\complement}|Q(x)|dx\le C\cdot\|f\|_1$

\end{itemize}
\end{definition}

An equivalent formulation of the above definition is the following statement, which is technically more convenient for applications.

\begin{lemma}\label{AsymApproxLemma} The system $\{\varphi_n\}_{n=1}^\infty$ possess the asymptotic approximation property in $L^1(\mathcal{M})$ if and only if for a dense subset $\mathcal{D}\subset L^1(\mathcal{M})$ and a positive number $C>0$ we have
\begin{eqnarray}
\left(\forall f\in\mathcal{D}\right)\left(\forall\epsilon,\delta,\sigma>0\right)\left(\forall n_0\in\mathbb{N}\right)\nonumber\\
\left(\exists N\in\mathbb{N}+n_0\right)\left(\exists \{\alpha_n\}_{n=n_0}^N\subset\mathbb{C}\right)\left(\exists \{\delta_n\}_{n=n_0}^N\subset\mathbb{G}\right)\left(\exists E\subset\mathcal{M}\right)\left(\exists\hat g\in L^1(\mathcal{M})\right)
\end{eqnarray}
such that:
\begin{itemize}

\item[1.] $\left|E^\complement\right|<\sigma$

\item[2.] $\|H\|_1<\epsilon$, $H\doteq\sum_{n=n_0}^N\alpha_n\varphi_n$

\item[3.] $(\forall x\in E)$ $f(x)=\hat g(x)$

\item[4.] $\|\hat g\|_1\le C\|f\|_1$

\item[5.] $\int\limits_E|f(x)-Q(x)|dx=\|\hat g-Q\|_1<\delta$, $Q\doteq\sum_{n=n_0}^N\delta_n\alpha_n\varphi_n$

\end{itemize}
\end{lemma}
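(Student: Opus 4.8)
The plan is to prove both implications with the same dense set $\mathcal{D}$, merely trading the constant $C$ for $C+1$; since the asymptotic approximation property only asserts the existence of \emph{some} positive constant, this loss is immaterial. The entire content of the equivalence is the observation that the quantity $\int_E|f-Q|$ appearing in Definition~\ref{AsymApproxPropDef} can be realized as a genuine $L^1(\mathcal{M})$-distance, by comparing $Q$ not with $f$ but with the auxiliary function obtained by gluing $f$ on $E$ to $Q$ on $E^\complement$.

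For the forward implication (Definition~\ref{AsymApproxPropDef} $\Rightarrow$ Lemma~\ref{AsymApproxLemma}), I would fix $f\in\mathcal{D}$, $\epsilon,\delta,\sigma>0$ and $n_0\in\mathbb{N}$, apply the asymptotic approximation property to obtain $N$, $\{\alpha_n\}$, $\{\delta_n\}$ and $E$ satisfying items 1--4 there, and then set
$$
\hat g\doteq f\cdot\mathbf 1_E+Q\cdot\mathbf 1_{E^\complement}.
$$
This $\hat g$ is measurable and agrees with $f$ on $E$, which is item 3; moreover $\hat g\in L^1(\mathcal{M})$, since $\|\hat g\|_1=\int_E|f|+\int_{E^\complement}|Q|\le\|f\|_1+C\|f\|_1=(1+C)\|f\|_1$ by item 4 of the Definition, and this is precisely item 4 of the Lemma with constant $1+C$. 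Because $\hat g-Q$ vanishes on $E^\complement$, we obtain $\|\hat g-Q\|_1=\int_E|\hat g-Q|=\int_E|f-Q|<\delta$, which is item 5; items 1 and 2 are inherited verbatim.

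For the reverse implication (Lemma~\ref{AsymApproxLemma} $\Rightarrow$ Definition~\ref{AsymApproxPropDef}), I would again fix $f,\epsilon,\delta,\sigma,n_0$ (disposing of the trivial case $\|f\|_1=0$ by taking all coefficients zero and $E=\mathcal{M}$) and invoke the Lemma-formulation, but with $\delta$ replaced by $\tilde\delta\doteq\min\{\delta,\|f\|_1\}$. Items 1 and 2 of the Definition are inherited. Item 3 follows immediately from item 5 of the Lemma, since $\int_E|f-Q|=\|\hat g-Q\|_1<\tilde\delta\le\delta$. For item 4, integrating the pointwise bound $|Q|\le|\hat g|+|\hat g-Q|$ over $E^\complement$ gives
$$
\int_{E^\complement}|Q|\le\|\hat g\|_1+\|\hat g-Q\|_1<C\|f\|_1+\tilde\delta\le(1+C)\|f\|_1,
$$
so the Definition holds with constant $1+C$.

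The only genuinely delicate point is this last estimate. In the Definition the tolerance $\delta$ is a free parameter, whereas the constant in item 4 must be uniform in $\delta$; if one invoked the Lemma naively with the given $\delta$, the term $\|\hat g-Q\|_1<\delta$ would not be controllable by a multiple of $\|f\|_1$. Coupling the two tolerances via $\tilde\delta=\min\{\delta,\|f\|_1\}$ is exactly what keeps this contribution proportional to $\|f\|_1$ and hence absorbable into the constant. Everything else is routine bookkeeping; the forward direction needs no such precaution, because there the error $\int_E|f-Q|$ is already bounded directly by the chosen $\delta$.
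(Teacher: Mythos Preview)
Your proof is correct and the forward direction is exactly the paper's argument (the paper's one-line proof simply records the choice $\hat g=f\mathbf{1}_E+Q\mathbf{1}_{E^\complement}$). Your reverse direction is more careful than the paper, which leaves it implicit, but your $\tilde\delta$-trick is unnecessary: the \emph{equality} $\int_E|f-Q|=\|\hat g-Q\|_1$ in item~5, together with item~3, forces $\hat g=Q$ a.e.\ on $E^\complement$, so $\int_{E^\complement}|Q|=\int_{E^\complement}|\hat g|\le\|\hat g\|_1\le C\|f\|_1$ directly, with no loss in the constant and no need to couple $\delta$ to $\|f\|_1$.
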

\begin{proof} It suffices to take $\hat g(x)=f(x)$ for $x\in E$ and $\hat g(x)=Q(x)$ for $x\notin E$. $\Box$
\end{proof}

\begin{remark}\label{DL1Remark} It is not difficult to show that the dense subset $\mathcal{D}$ in Definition \ref{AsymApproxPropDef} and Lemma \ref{AsymApproxLemma} can be replaced by the entire space $L^1(\mathcal{M})$.
\end{remark}

We are ready to present the second important result of this paper. Recall that to a minimal system $\{\varphi_n\}_{n=1}^\infty\subset L^1(\mathcal{M})$ we associate the bi-orthogonal system $(\{\varphi_n\}_{n=1}^\infty,\{c_n\}_{n=1}^\infty)$ for the dual pair $(L^1(\mathcal{M}),L^\infty(\mathcal{M}))$.

\begin{theorem}\label{AsymTheorem1} If the minimal system $\{\varphi_n\}_{n=1}^\infty$ possesses the asymptotic approximation property in $L^1(\mathcal{M})$ then there exists an integrable function $U\in L^1(\mathcal{M})$ which is asymptotically conditionally universal for $L^1(\mathcal{M})$.
\end{theorem}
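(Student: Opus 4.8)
The plan is to mirror the inductive block construction used in the proof of Theorem \ref{TheoremAbstract}, replacing the single metric approximation by the set-localised $L^1$ approximation furnished by the asymptotic approximation property, and then to assemble the sets $F_m$ as a nested family built from the local approximation sets. Since $\mathcal{M}$ is separable, $L^1(\mathcal{M})$ is separable; I would fix a sequence $\{f_k\}_{k=1}^\infty\subset L^1(\mathcal{M})$ in which every element of a countable dense subset occurs infinitely often, so that each $f\in L^1(\mathcal{M})$ admits approximants $f_{k_q}\to f$ in $L^1$ with indices $k_q\to\infty$. Throughout I would use the convenient reformulation of Lemma \ref{AsymApproxLemma}, and invoke Remark \ref{DL1Remark} so that the property may be applied to arbitrary $L^1$ targets rather than only to $\mathcal{D}$.

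Set $M_1\doteq1$. At step $k$, consider the residual $r_k\doteq f_k-\sum_{\ell=1}^{k-1}Q_\ell\in L^1(\mathcal{M})$ and apply the property to $r_k$ with $\epsilon\leftarrow2^{-k}$, $\sigma\leftarrow2^{-k}$, $\delta\leftarrow\delta_k$ (any prescribed null sequence) and $n_0\leftarrow M_k$. This yields $M_k^*\ge M_k$, tuples $\{\alpha_n^{(k)}\}$, $\{\delta_n^{(k)}\}\subset\mathbb{G}$, a set $E_k$ with $|E_k^\complement|<2^{-k}$, and $\hat g_k$ with $\hat g_k=r_k$ on $E_k$, such that the blocks
$$H_k\doteq\sum_{n=M_k}^{M_k^*}\alpha_n^{(k)}\varphi_n,\qquad Q_k\doteq\sum_{n=M_k}^{M_k^*}\delta_n^{(k)}\alpha_n^{(k)}\varphi_n$$
satisfy $\|H_k\|_1<2^{-k}$ and $\|\hat g_k-Q_k\|_1<\delta_k$; put $M_{k+1}\doteq M_k^*+1$. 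The crucial bookkeeping is the telescoping identity $\sum_{\ell=1}^kQ_\ell-f_k=Q_k-r_k$, which together with $\hat g_k=r_k$ on $E_k$ gives $\int_{E_k}|\sum_{\ell=1}^kQ_\ell-f_k|\le\|\hat g_k-Q_k\|_1<\delta_k$; the contributions of all earlier blocks are absorbed exactly into $r_k$. Defining $\{\alpha_n\}$, $\{\delta_n\}$ block-wise as before and setting $U\doteq\sum_kH_k$, the bound $\sum_k\|H_k\|_1<\infty$ gives $U\in L^1(\mathcal{M})$ with $\sum_{n=1}^{M_k^*}\alpha_n\varphi_n\to U$ in $L^1$, so Lemma \ref{FourierCoeffUniqueLemma} identifies $\alpha_n=c_n(U)$, and $\sum_{n=1}^{M_k^*}\delta_nc_n(U)\varphi_n=\sum_{\ell=1}^kQ_\ell$.

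Next I would define $F_m\doteq\bigcap_{k\ge m}E_k$. These sets are increasing in $m$ and satisfy $|F_m^\complement|\le\sum_{k\ge m}|E_k^\complement|<\sum_{k\ge m}2^{-k}=2^{-m+1}\to0$, so the requirement (\ref{F_mDef}) holds. Given any $f\in L^1(\mathcal{M})$, choose $k_q\to\infty$ with $f_{k_q}\to f$ in $L^1$ and set $N_q\doteq M_{k_q}^*$. For a fixed $m$ and all $q$ with $k_q\ge m$ one has $F_m\subset E_{k_q}$, whence
$$\int\limits_{F_m}\Bigl|\sum_{n=1}^{N_q}\delta_nc_n(U)\varphi_n-f\Bigr|\le\int\limits_{E_{k_q}}\Bigl|\sum_{\ell=1}^{k_q}Q_\ell-f_{k_q}\Bigr|+\|f_{k_q}-f\|_1<\delta_{k_q}+\|f_{k_q}-f\|_1\xrightarrow[q\to\infty]{}0,$$
establishing asymptotic conditional universality of $U$ with the single sign sequence $\{\delta_n\}$ and the single nested family $\{F_m\}$.

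The main obstacle, and the point requiring the most care, is producing one nested family $\{F_m\}$ with $|F_m^\complement|\to0$ that serves all targets $f$ simultaneously, even though each inductive step only controls approximation on its own set $E_k$: taking $F_m$ to be the tail intersection $\bigcap_{k\ge m}E_k$ resolves this precisely because the local errors $|E_k^\complement|$ are summable. The only other delicate point is the enumeration of $\{f_k\}$, which must allow approximants of every $f$ with indices tending to infinity; this is exactly why each dense element is repeated infinitely often. I would also note that the overflow bound (condition 4 of Definition \ref{AsymApproxPropDef}) appears not to be needed for this existence statement, since the telescoping absorbs the earlier blocks automatically; it should become relevant only for the abundance statement of Theorem \ref{AsymTheorem2}.
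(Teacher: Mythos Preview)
Your argument is correct and is in fact cleaner than the paper's for this theorem alone. The paper carries out a \emph{two-block} induction: at step $k$ it first approximates $f_k$ itself, producing $H_k^{(1)},Q_k^{(1)},E_k^{(1)},\hat g_k$, and then approximates the residual $f_k-Q_k^{(1)}-\sum_{j<k}(H_j^{(1)}+Q_j^{(2)})$, producing $H_k^{(2)},Q_k^{(2)},E_k^{(2)}$; the sign sequence is chosen so that the modified partial sums equal $\sum_j(H_j^{(1)}+Q_j^{(2)})$, and the sets $F_m$ are tail intersections of the $E_k^{(2)}$. Your single-block scheme, applying the property directly to $r_k=f_k-\sum_{\ell<k}Q_\ell$ and taking $F_m=\bigcap_{k\ge m}E_k$, already suffices for Theorem \ref{AsymTheorem1} and uses exactly the telescoping and tail-intersection ideas that matter. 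The paper's extra layer (the first block, the objects $Q_k^{(1)},E_k^{(1)},\hat g_k$, and the overflow bound in item 4 of Definition \ref{AsymApproxPropDef}) is deliberately planted for reuse in Theorem \ref{AsymTheorem2}, as the paper itself remarks; you correctly observe that item 4 is not needed here. One minor comment: your insistence that each dense element be repeated infinitely often is harmless but unnecessary, since $L^1(\mathcal{M})$ has no isolated points and hence any tail of a dense sequence remains dense, so indices $k_q\to\infty$ with $f_{k_q}\to f$ always exist.
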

\begin{proof} By separability of $L^1(\mathcal{M})$ choose a dense sequence $\{f_k\}_{k=1}^\infty\subset L^1(\mathcal{M})$. Using mathematical induction, we construct sequences of coefficients $\{\alpha_n\}_{n=1}^\infty\subset\mathbb{C}$, of $\mathbb{G}$-values (signs) $\{\beta_n\}_{n=1}^\infty\subset\mathbb{G}$, of polynomials $\{H_k^{(1)}\}_{k=1}^\infty$, $\{H_k^{(2)}\}_{k=1}^\infty$, $\{Q_k^{(1)}\}_{k=1}^\infty$ and $\{Q_k^{(2)}\}_{k=1}^\infty$ in the system $\{\varphi_n\}_{n=1}^\infty$, of subsets $\{E_k^{(1)}\}_{k=1}^\infty$ and $\{E_k^{(2)}\}_{k=1}^\infty$, and of integrable functions $\{\hat g_k\}_{k=1}^\infty$ as follows. Set $M_1\doteq1$. On each step $k\in\mathbb{N}$, first use Lemma \ref{AsymApproxLemma} (taking into account Remark \ref{DL1Remark}) with substitutions
$$
f\leftarrow f_k,\quad\epsilon\leftarrow\frac1{2^{k+1}},\quad\delta\leftarrow\frac1{2^{k+1}},\quad\sigma\leftarrow\frac{1}{2^{k+1}},\quad n_0\leftarrow M_{2k-1}
$$
to obtain the number $M_{2k}\in\mathbb{N}+M_{2k-1}$, the tuples
$$
\left\{\alpha_n\right\}_{n=M_{2k-1}}^{M_{2k}-1}\subset\mathbb{C},\quad\left\{\beta_n\right\}_{n=M_{2k-1}}^{M_{2k}-1}\subset\mathbb{G},
$$
the polynomials
\begin{equation}
H_k^{(1)}\doteq\sum_{n=M_{2k-1}}^{M_{2k}-1}\alpha_n\varphi_n,\quad Q_k^{(1)}\doteq\sum_{n=M_{2k-1}}^{M_{2k}-1}\beta_n\alpha_n\varphi_n,\label{FourierPoly1}
\end{equation}
the subset $E_k^{(1)}\subset\mathcal{M}$ and the function $\hat g_k\in L^1(\mathcal{M})$ such that
$$
|E_k^{(1)}|>|\mathcal{M}|-\frac{1}{2^{k+1}},\quad\left\|H_k^{(1)}\right\|_1<\frac1{2^{k+1}},
$$
\begin{equation}
\|\hat g_k\|_1\le C\|f_k\|_1,\quad\left\|\hat g_k-Q_k^{(1)}\right\|_1<\frac1{2^{k+1}},\quad\hat g_k|_{E_k^{(1)}}=f_k|_{E_k^{(1)}}.\label{IndPart1}
\end{equation}
Next, apply the same Lemma \ref{AsymApproxLemma} (again in view of Remark \ref{DL1Remark}) with substitutions
$$
f\leftarrow f_k-Q_k^{(1)}-\sum_{j=1}^{k-1}\left(H_j^{(1)}+Q_j^{(2)}\right),\quad\epsilon\leftarrow\frac1{2^{k+1}},\quad\delta\leftarrow\frac{1}{k},\quad\sigma\leftarrow\frac{1}{2^{k+1}},\quad n_0\leftarrow M_{2k}
$$
to obtain the number $M_{2k+1}\in\mathbb{N}+M_{2k}$, the tuples
$$
\left\{\alpha_n\right\}_{n=M_{2k}}^{M_{2k+1}-1}\subset\mathbb{C},\quad\left\{\beta_n\right\}_{n=M_{2k}}^{M_{2k+1}-1}\subset\mathbb{G},
$$
the polynomials
\begin{equation}
H_k^{(2)}\doteq\sum_{n=M_{2k}}^{M_{2k+1}-1}\alpha_n\varphi_n,\quad Q_k^{(2)}\doteq\sum_{n=M_{2k}}^{M_{2k+1}-1}\beta_n\alpha_n\varphi_n,\label{FourierPoly2}
\end{equation}
and the subset $E_k^{(2)}\subset\mathcal{M}$ such that
\begin{equation}
|E_k^{(2)}|>|\mathcal{M}|-\frac{1}{2^{k+1}},\quad\left\|H_k^{(2)}\right\|_1<\frac1{2^{k+1}},\quad\int\limits_{E_k^{(2)}}\left|f_k(x)-\sum_{j=1}^k\left(H_j^{(1)}(x)+Q_j^{(2)}(x)\right)\right|dx<\frac{1}{k}.\label{IndPart2}
\end{equation}

Now set
\begin{equation}
F_m\doteq\bigcap_{k=m}^\infty E_k^{(2)},\quad\forall m\in\mathbb{N}.\label{FmDef}
\end{equation}
It is clear that
$$
|F_m^\complement|\le\sum_{k=m}^\infty\left|(E_k^{(2)})^\complement\right|<\sum_{k=m}^\infty\frac{1}{2^{k+1}}=\frac{1}{2^m},
$$
so that indeed,
$$
F_1\subset F_2\subset\ldots\subset\mathcal{M},\quad\lim\limits_{m\to\infty}\left|F_m^\complement\right|=0.
$$
Set
$$
U\doteq\sum_{k=1}^\infty\left(H_k^{(1)}+H_k^{(2)}\right)=\sum_{n=1}^\infty\alpha_n\varphi_n.
$$
From (\ref{IndPart1}) and (\ref{IndPart2}) it follows that
$$
\sum_{k=1}^\infty\left(\|H_k^{(1)}\|_1+\|H_k^{(2)}\|_1\right)<1,
$$
therefore $U\in L^1(\mathcal{M})$. Moreover,
$$
\lim_{k\to\infty}\left\|\sum_{n=1}^{M_{2k+1}-1}\alpha_n\varphi_n-U\right\|_1=\lim_{k\to\infty}\left\|\sum_{\ell=k+1}^\infty\left(H_\ell^{(1)}+H_\ell^{(2)}\right)\right\|_1=0,
$$
so that by Lemma \ref{FourierCoeffUniqueLemma} the series $\sum_{n=1}^\infty\alpha_n\varphi_n$ is the Fourier series of the element $U$,
\begin{equation}
\alpha_n=c_n(U),\quad\forall n\in\mathbb{N}.\label{FourierCoeffId}
\end{equation}
Define the sequence $\{\delta_n\}_{n=1}^\infty\subset\mathbb{G}$ by
$$
\delta_n\doteq\begin{cases}
1 & \mbox{for}\quad n\in[M_{2k-1},M_{2k}),\quad k\in\mathbb{N}\\
\beta_n & \mbox{for}\quad n\in[M_{2k},M_{2k+1}),\quad k\in\mathbb{N}
\end{cases},\quad n\in\mathbb{N}.
$$

Now take any $f\in L^1(\mathcal{M})$ and find a subsequence $\{l_q\}_{q=1}^\infty\subset\mathbb{N}$ such that $f_{l_q}\xrightarrow[q\to\infty]{L^1}f$, and set $N_q\doteq M_{2l_q+1}-1$ for $q\in\mathbb{N}$. Then for every $m\in\mathbb{N}$ and every $q\in\mathbb{N}$ such that $l_q\ge m$ we have from (\ref{FourierPoly1}),(\ref{FourierPoly2}),(\ref{IndPart2}),(\ref{FmDef}) and (\ref{FourierCoeffId}) that
$$
\int\limits_{F_m}\left|\sum_{n=1}^{N_q}\delta_nc_n(U)\varphi_n(x)-f(x)\right|dx\le\int\limits_{F_m}\left|\sum_{n=1}^{N_q}\delta_n\alpha_n\varphi_n(x)-f_{l_q}(x)\right|dx+\|f_{l_q}-f\|_1
$$
$$
\le\int\limits_{E_{l_q}^{(2)}}\left|\sum_{j=1}^{l_q}\left(H_j^{(1)}(x)+Q_j^{(2)}(x)\right)-f_{l_q}(x)\right|dx+\|f_{l_q}-f\|_1\xrightarrow[q\to\infty]{}0,
$$
which completes the proof of the theorem. $\Box$
\end{proof}

\begin{remark} Mathematical objects introduced but not used in the proof of Theorem \ref{AsymTheorem1} will become part of the proof of Theorem \ref{AsymTheorem2}.
\end{remark}

Theorem \ref{AsymTheorem1} may leave the impression that the asymptotically conditionally universal function $U$ is very special. However, below we will prove a stronger result by showing that we can produce asymptotically conditionally universal functions by a small modification of an arbitrary integrable function. %For technical reasons we will restrict to the case where $\mathbb{G}\subset\mathbb{S}^1$ is a multiplicative subgroup of the unit circle.

\begin{theorem}\label{AsymTheorem2} If the minimal system $\{\varphi_n\}_{n=1}^\infty$ possesses the asymptotic approximation property in $L^1(\mathcal{M})$ then there exist an integrable function $U\in L^1(\mathcal{M})$, which is
asymptotically conditionally universal for $L^1(\mathcal{M})$ w.r.t. $\{\varphi_n\}_{n=1}^\infty$, and a sequence of subsets $\{E_m\}_{m=1}^\infty\subset2^\mathcal{M}$, with
$$
E_1\subset E_2\subset\ldots\subset\mathcal{M},\quad\lim_{m\to\infty}\left|E_m^\complement\right|=0,
$$
such that
$$
\left(\forall g\in L^1(\mathcal{M})\right)\left(\forall m\in\mathbb{N}\right)\left(\exists V_m\in L^1(\mathcal{M})\right)
$$
$$
\left(\forall x\in E_m\right)\quad V_m(x)=g(x)\quad\wedge\quad(\exists\{\varepsilon_n\}_{n=1}^\infty\subset\mathbb{G})(\forall n\in\mathbb{N})\quad c_n(V_m)=\varepsilon_nc_n(U).
$$
\end{theorem}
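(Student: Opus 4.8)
The plan is to reuse the function $U$ and the data produced in the proof of Theorem \ref{AsymTheorem1}, exploiting precisely the objects $Q_k^{(1)}$, $E_k^{(1)}$ and $\hat g_k$ from (\ref{FourierPoly1})--(\ref{IndPart1}) that played no role there. Recall that flipping the signs on the $k$-th type-one block turns the small polynomial $H_k^{(1)}$ into $Q_k^{(1)}$, and that by (\ref{IndPart1}) one has $\|Q_k^{(1)}-\hat g_k\|_1<2^{-k-1}$, together with $\hat g_k=f_k$ on $E_k^{(1)}$ and $|(E_k^{(1)})^\complement|<2^{-k-1}$. I would set, once and for all and independently of $g$,
$$
E_m\doteq\bigcap_{k=m}^\infty E_k^{(1)},
$$
so that $E_1\subset E_2\subset\cdots$, $|E_m^\complement|\le\sum_{k\ge m}2^{-k-1}=2^{-m}\to0$, and $E_m\subset E_k^{(1)}$ for every $k\ge m$; in particular $Q_k^{(1)}=\hat g_k=f_k$ on $E_m$ up to an $L^1$-error below $2^{-k-1}$ whenever $k\ge m$.

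Fix now $g\in L^1(\mathcal M)$ and $m\in\mathbb N$. The idea is to manufacture $V_m$ by flipping the signs of $U$ on a sparse sequence of type-one blocks $k_1<k_2<\cdots$, all $\ge m$, chosen adaptively so that the resulting changes reproduce $g-U$ on $E_m$ exactly. Writing the block increment $D_j\doteq Q_{k_j}^{(1)}-H_{k_j}^{(1)}$, flipping exactly these blocks produces the sign-modified series
$$
V_m=\sum_{n=1}^\infty\varepsilon_n\alpha_n\varphi_n=U+\sum_{j=1}^\infty D_j,\qquad \varepsilon_n=\begin{cases}\beta_n,& n\in\text{block }k_j\text{ for some }j\\ 1,&\text{otherwise,}\end{cases}
$$
with all $\varepsilon_n\in\mathbb G$. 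On $E_m$ one has $D_j=f_{k_j}+\theta_j$ with $\|\theta_j\|_{L^1(E_m)}<2^{-k_j}$, since there $Q_{k_j}^{(1)}$ differs from $\hat g_{k_j}=f_{k_j}$ by at most $2^{-k_j-1}$ and $\|H_{k_j}^{(1)}\|_1<2^{-k_j-1}$. I would then run a greedy telescoping: put $r_0\doteq(g-U)|_{E_m}$, and at stage $j$, having fixed $k_1,\dots,k_{j-1}$, let $r_{j-1}\doteq(g-U)-\sum_{i<j}D_i$ on $E_m$ and use the density of $\{f_k\}$ in $L^1(\mathcal M)$ (every $L^1$-ball contains infinitely many terms, as $\mathcal M$ is diffuse) to pick $k_j>k_{j-1}$, $k_j\ge m$, with $\|f_{k_j}-r_{j-1}\mathbf 1_{E_m}\|_1<\eta_j$ as small as desired. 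Then $\|r_j\|_{L^1(E_m)}\le\|r_{j-1}-f_{k_j}\|_{L^1(E_m)}+\|\theta_j\|_{L^1(E_m)}$ can be forced below $2^{-j}$.

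Two verifications close the argument. For membership, $\|D_j\|_1\le\|Q_{k_j}^{(1)}\|_1+\|H_{k_j}^{(1)}\|_1\le C\|f_{k_j}\|_1+2^{-k_j}$, and since $\|f_{k_j}\|_1\le\|r_{j-1}\|_{L^1(E_m)}+\eta_j$ is summable (geometrically small for $j\ge2$, finite for $j=1$), the series $U+\sum_jD_j$ converges absolutely in $L^1(\mathcal M)$; Lemma \ref{FourierCoeffUniqueLemma} then gives $c_n(V_m)=\varepsilon_n\alpha_n=\varepsilon_nc_n(U)$. For the exact identity on $E_m$, the telescoping yields $\sum_{i\le j}D_i=(g-U)-r_j\to g-U$ in $L^1(E_m)$, whence $(\sum_jD_j)|_{E_m}=(g-U)|_{E_m}$ a.e., and therefore $V_m=U+\sum_jD_j=g$ a.e. on $E_m$.

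The main obstacle I anticipate is securing exact (a.e.) agreement with $g$ on a prescribed fixed set $E_m$ out of building blocks that only approximate their targets, and in the presence of infinitely many small but nonzero kept blocks. The resolution is the two devices above: targeting $g-U$ rather than $g$, so that the totality of the kept blocks, whose sum is exactly $U$, is accounted for automatically; and folding each block's approximation error $\theta_j$ into the next target through the adaptive choice of $k_j$, so that the residual $r_j$ is driven to zero in $L^1(E_m)$ while the global $L^1(\mathcal M)$ tails stay summable. Finally, since the coefficients of $V_m$ differ from those of the asymptotically conditionally universal $U$ only by $\mathbb G$-values, $V_m$ inherits the same universality (compose the sign patterns), which is the point advertised in the abstract.
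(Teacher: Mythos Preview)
Your argument is correct and is in fact a cleaner variant of the paper's own proof. Both proofs build $V_m$ from $U$ by flipping the signs on a sparse sequence of type-one blocks, so that each flipped block contributes a polynomial that equals (up to a small $L^1$-error) a term $f_k$ of the dense sequence on $E_m$. The paper, however, organises this in two layers: it first fixes a telescoping decomposition $g\approx\sum_q f_{k_q}$ in $L^1(\mathcal M)$, and then at each stage chooses a \emph{second} index $\nu_q$ (with $f_{\nu_q}$ close to $f_{k_q}$ plus the running residual), flips block $\nu_q$, and sets $V_m=\sum_q g_q$ with $g_q=f_{k_q}+\hat g_{\nu_q}-f_{\nu_q}$; it also takes $E_m=\bigcap_{k\ge m}(E_k^{(1)}\cap E_k^{(2)})$. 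Your approach collapses the two layers into one by targeting $g-U$ directly and choosing the flip indices $k_j$ greedily against the residual $r_{j-1}\mathbf 1_{E_m}$; this makes the identity $V_m=U+\sum_j D_j$ and the verification $V_m|_{E_m}=g|_{E_m}$ more transparent, and it shows that only the sets $E_k^{(1)}$ are needed in the definition of $E_m$. The paper's two-index scheme buys nothing extra here; your route is shorter and yields slightly larger sets $E_m$. (Your closing remark that $V_m$ inherits asymptotic conditional universality is a pleasant bonus, though not part of the theorem as stated.)
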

\begin{proof} We retain all constructions and notations from the proof of Theorem \ref{AsymTheorem1}. Set
\begin{equation}
E_m\doteq\bigcap_{k=m}^\infty\left(E_k^{(1)}\cap E_k^{(2)}\right),\quad\forall m\in\mathbb{N},
\end{equation}
so that
$$
\left|E_m^\complement\right|\le\sum_{k=m}^\infty\left(\left\|E_k^{(1)}\right\|+\left\|E_k^{(2)}\right\|\right)<\sum_{k=m}^\infty\left(\frac1{2^{k+1}}+\frac1{2^{k+1}}\right)=\frac1{2^{m-1}}\xrightarrow[m\to\infty]{}0.
$$
Take an arbitrary $g\in L^1(\mathcal{M})$ , fix $m\in\mathbb{N}$  and choose a subsequence $\{k_q\}_{q=1}^\infty\subset\mathbb{N}+m$, such that
\begin{equation}
\left\|\sum_{r=1}^qf_{k_r}-g\right\|_1<\frac1{2^{q+2}},\quad\forall q\in\mathbb{N}.\label{kqDef}
\end{equation}
It follows that
\begin{equation}
\left\|f_{k_q}\right\|_1\le\left\|\sum_{r=1}^qf_{k_r}-g\right\|_1+\left\|\sum_{r=1}^{q-1}f_{k_r}-g\right\|_1<\frac1{2^{q+2}}+\frac1{2^{q+1}}<\frac1{2^q},\quad\forall q\in\mathbb{N}+1.\label{fkqEst}
\end{equation}
Set $\nu_0\doteq1$. We will use mathematical induction to construct a subsequence $\{\nu_q\}_{q=1}^\infty\subset\mathbb{N}$ and a sequence of functions $\{g_q\}_{q=1}^\infty\subset L^1(\mathcal{M})$ such that
\begin{equation}
g_q|_{E_m}=f_{k_q}|_{E_m},\quad\|g_{q+1}\|_1\le\frac{C+1}{2^{q-2}},\quad\left\|\sum_{r=1}^q\left[g_r-\left(\sum_{k=\nu_{r-1}}^{\nu_r-1}H_k+Q_{\nu_r}^{(1)}+H_{\nu_r}^{(2)}\right)\right]\right\|_1<\frac3{2^q},\quad\forall q\in\mathbb{N},\label{InductiveHyp}
\end{equation}
where
$$
H_k\doteq H_k^{(1)}+H_k^{(2)},\quad\forall k\in\mathbb{N}.
$$
For every $q\in\mathbb{N}$ choose the number $\nu_q>\nu_{q-1} +m$ such that
\begin{equation}
\left\|f_{k_q}+\sum_{r=1}^{q-1}\left[g_r-\left(\sum_{k=\nu_{r-1}}^{\nu_r-1}H_k+Q_{\nu_r}^{(1)}+H_{\nu_r}^{(2)}\right)\right]-f_{\nu_q}\right\|_1<\frac1{2^{q+1}},\label{nuqDef}
\end{equation}
and set
\begin{equation}
g_q\doteq f_{k_q}+\hat g_{\nu_q}-f_{\nu_q}.\label{gqDef}
\end{equation}
That $g_q|_{E_m}=f_{k_q}|_{E_m}$ is clear from $\hat g_{\nu_q}|_{E_{\nu_q}^{(1)}}=f_{\nu_q}|_{E_{\nu_q}^{(1)}}$ in (\ref{IndPart1}). Next we check that
$$
\left\|\sum_{r=1}^q\left[g_r-\left(\sum_{k=\nu_{r-1}}^{\nu_r-1}H_k+Q_{\nu_r}^{(1)}+H_{\nu_r}^{(2)}\right)\right]\right\|_1
$$
$$
=\left\|g_q-\sum_{k=\nu_{q-1}}^{\nu_q-1}H_k-Q_{\nu_q}^{(1)}-H_{\nu_q}^{(2)}+\sum_{r=1}^{q-1}\left[g_r-\left(\sum_{k=\nu_{r-1}}^{\nu_r-1}H_k+Q_{\nu_r}^{(1)}+H_{\nu_r}^{(2)}\right)\right]\right\|_1
$$
$$
=\left\|f_{k_q}+\hat g_{\nu_q}-f_{\nu_q}-\sum_{k=\nu_{q-1}}^{\nu_q-1}H_k-Q_{\nu_q}^{(1)}-H_{\nu_q}^{(2)}+\sum_{r=1}^{q-1}\left[g_r-\left(\sum_{k=\nu_{r-1}}^{\nu_r-1}H_k+Q_{\nu_r}^{(1)}+H_{\nu_r}^{(2)}\right)\right]\right\|_1
$$
$$
\le\left\|f_{k_q}+\sum_{r=1}^{q-1}\left[g_r-\left(\sum_{k=\nu_{r-1}}^{\nu_r-1}H_k+Q_{\nu_r}^{(1)}+H_{\nu_r}^{(2)}\right)\right]-f_{\nu_q}\right\|_1+\left\|\hat g_{\nu_q}-Q_{\nu_q}^{(1)}\right\|_1+\sum_{k=\nu_{q-1}}^{\nu_q}\left\|H_k\right\|_1
$$
\begin{equation}
<\frac1{2^{q+1}}+\frac1{2^{\nu_q+1}}+\frac1{2^{\nu_{q-1}-1}}\le\frac3{2^q},\quad\forall q\in\mathbb{N},\label{IntEst1}
\end{equation}
where we used (\ref{nuqDef}), (\ref{IndPart1}) and (\ref{IndPart2}) combined with $\nu_q\ge q$. From (\ref{IntEst1}), (\ref{nuqDef}) and (\ref{fkqEst}) we establish that
$$
\left\|f_{\nu_q}\right\|_1\le\left\|f_{k_q}+\sum_{r=1}^{q-1}\left[g_r-\left(\sum_{k=\nu_{r-1}}^{\nu_r-1}H_k+Q_{\nu_r}^{(1)}+H_{\nu_r}^{(2)}\right)\right]-f_{\nu_q}\right\|_1+\left\|f_{k_q}\right\|_1
$$
$$
+\left\|\sum_{r=1}^{q-1}\left[g_r-\left(\sum_{k=\nu_{r-1}}^{\nu_r-1}H_k+Q_{\nu_r}^{(1)}+H_{\nu_r}^{(2)}\right)\right]\right\|_1<\frac1{2^{q+1}}+\frac1{2^q}+\frac3{2^{q-1}}=\frac{15}{2^{q+1}},\quad\forall q\in\mathbb{N}+1.
$$
In conjunction with (\ref{fkqEst}), (\ref{gqDef}) and (\ref{IndPart1}), this implies
$$
\left\|g_q\right\|_1\le\left\|f_{k_q}\right\|_1+\left\|\hat g_{\nu_q}\right\|_1+\left\|f_{\nu_q}\right\|_1<\frac{C+1}{2^{q-3}},\quad\forall q\in\mathbb{N}+1.
$$
Thus, the properties (\ref{InductiveHyp}) are established.

Now set
$$
V_m\doteq\sum_{q=1}^\infty g_q.
$$
From (\ref{InductiveHyp}) we see that
$$
\sum_{q=1}^\infty\left\|g_q\right\|_1<\left\|g_1\right\|_1+\sum_{q=2}^\infty\frac{C+1}{2^{q-3}}<\infty,
$$
which shows that $V_m\in L^1(\mathcal{M})$. Moreover, from (\ref{InductiveHyp}) and (\ref{kqDef}) it follows that
$$
V_m|_{E_m}=\sum_{q=1}^\infty g_q|_{E_m}=\sum_{q=1}^\infty f_{k_q}|_{E_m}=g|_{E_m}.
$$
Define the sequence of $\mathbb{G}$-values (signs) $\{\varepsilon_n\}_{n=1}^\infty\subset\mathbb{G}$ by
$$
\varepsilon_n\doteq\begin{cases}
\beta_n & \mbox{for}\quad n\in[M_{2\nu_q-1},M_{2\nu_q}),\quad q\in\mathbb{N}\\
1 & \mbox{else}
\end{cases},\quad\forall n\in\mathbb{N}.
$$
Then by (\ref{InductiveHyp}) and by the construction of the function $U$ in the proof of Theorem \ref{AsymTheorem1} we have
$$
\left\|\sum_{n=1}^{M_{2\nu_q+1}-1}\varepsilon_nc_n(U)\varphi_n-V_m\right\|_1\le\left\|\sum_{r=1}^q\left[g_r-\left(\sum_{k=\nu_{r-1}}^{\nu_r-1}H_k+Q_{\nu_r}^{(1)}+H_{\nu_r}^{(2)}\right)\right]\right\|_1+\sum_{r=q+1}^\infty\left\|g_r\right\|_1
$$
$$
<\frac3{2^q}+\frac{C+1}{2^{q-3}}\xrightarrow[q\to\infty]{}0,
$$
which by Lemma \ref{FourierCoeffUniqueLemma} implies that $c_n(V_m)=\varepsilon_nc_n(U)$ for $n\in\mathbb{N}$. The proof is complete. $\Box$
\end{proof}

\begin{remark} If $\mathcal{M}=(\mathcal{M},\Sigma,|\cdot|)$ is in fact a Borel measure space on a topological space, and the subset $E$ in Definition \ref{UniApproxPropDef} is closed then the sets $\{F_m\}_{m=0}^\infty$ and $\{E_m\}_{m=0}^\infty$ in Theorem \ref{AsymTheorem1} and Theorem \ref{AsymTheorem2} can be considered closed.
\end{remark}

\section{Notes on classical systems}

That Definition \ref{UniApproxPropDef} of the universal approximation property is not vacuous is clear from the following lemma. Here we choose the classical case $\mathbb{G}=\{\pm1\}$.

\begin{lemma}\label{TrigUniApproxPropLemma} Let $\Omega=L^p([-\pi,\pi])$, $p\in(0,1)$, $\mathcal{X}=L^1([-\pi,\pi])$ and consider the trigonometric system
$$
\varphi_n(x)=e^{\imath nx},\quad\forall x\in[-\pi,\pi],\quad\forall n\in\mathbb{N}.
$$
Then the system $\{\varphi_n\}_{n=1}^\infty$ possesses the universal approximation property in $\Omega$.
\end{lemma}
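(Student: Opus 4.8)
The plan is to verify the condition of Definition~\ref{UniApproxPropDef} by isolating a single ``building block'' and assembling copies of it over consecutive, disjoint frequency ranges. Take the dense set $\mathcal{D}\subset L^1([-\pi,\pi])$ to be the step functions $f=\sum_{j=1}^{J}c_j\chi_{\Delta_j}$. Two elementary reductions set the stage. For $p\in(0,1)$ the invariant metric $d(f,g)=\int_{-\pi}^{\pi}|f-g|^p\,dx$ is subadditive under sums, $\int|\sum_j h_j|^p\le\sum_j\int|h_j|^p$, and $\|\cdot\|_1$ is subadditive; hence if each atom $c_j\chi_{\Delta_j}$ is matched by a polynomial whose spectrum lies in a block $[m_{j-1},m_j)$ with $m_0=n_0$, contributing at most $\epsilon/J$ to $\|H\|_1$ and at most $\delta/J$ to $d(f,Q)$, then summing the blocks yields admissible $H$ and $Q$. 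Furthermore, multiplication by a unimodular exponential $e^{\imath Kx}$ is simultaneously an $L^1$- and a $d$-isometry that shifts spectra up by $K$ and merely relabels the coefficients and their signs; applied to the twisted target $e^{-\imath Kx}c\chi_\Delta$ it returns $c\chi_\Delta$. This both disposes of the lower bound $n\ge n_0$ for free and lets us slide each atom's approximant into its assigned block, reducing everything to the following building block (for a target that may carry an extra unimodular factor, to which the construction will be insensitive).

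Building block: given an interval $\Delta$, a constant $c$, and $\epsilon,\gamma>0$, produce an analytic polynomial $H=\sum_n a_n e^{\imath nx}$ and signs $\delta_n\in\{\pm1\}$ such that $\|H\|_1<\epsilon$ while $Q=\sum_n\delta_n a_n e^{\imath nx}$ satisfies $\int_{-\pi}^{\pi}|Q-c\chi_\Delta|^p\,dx<\gamma$. The conceptual engine is the collapse of $L^1$--$L^p$ comparability for $p<1$. A one-sided analytic polynomial has vanishing mean, so in $L^2$ it could never approach $c\chi_\Delta$; but in $L^p$ it can, because the mass $c|\Delta|$ may be parked in a tall spike on a set of measure $\zeta$, whose height $\sim c|\Delta|/\zeta$ contributes only $\sim(c|\Delta|)^p\zeta^{1-p}\to0$ to $\int|Q-c\chi_\Delta|^p$. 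This is exactly why the far-from-complete system $\{e^{\imath nx}\}_{n\ge1}$ can approximate arbitrary targets in $L^p$, $p<1$, where high-frequency analytic polynomials are dense.

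The crux, and the step I expect to be the main obstacle, is to meet both demands at once, since they pull in opposite directions. The $L^2$ norm is invariant under sign flips, so $\|H\|_2=\|Q\|_2$, and the spike forces this common value to be large; then $\|H\|_1\ge\|H\|_2^2/\|H\|_\infty$ shows that $H$ must likewise concentrate, yet under the base signs it has to interfere \emph{destructively} down to $\|H\|_1<\epsilon$, whereas $Q$ must interfere \emph{constructively} to rebuild the plateau $c$ on $\Delta$. A fixed good $L^p$-approximant cannot be coerced into this shape by sign flips alone, so the polynomial must be engineered with redundancy from the outset. I therefore plan to build $Q$ as a superposition of many modulated concentrated kernels (Fej\'er- or de~la~Vall\'ee-Poussin-type) spread over a long high-frequency band, with carefully graded moduli $|a_n|$ and a prescribed sign pattern, so that under the base signs the successive contributions telescope and cancel to $\|H\|_1<\epsilon$, while under the flipped signs they accumulate to $c$ on $\Delta$ off a thin exceptional set of $d$-mass below $\gamma$. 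Letting the length of the band, hence the degree, tend to infinity drives $\|H\|_1\to0$ and $d(c\chi_\Delta,Q)\to0$ together. With the building block in hand, the two reductions above assemble it into the universal approximation property of $\{e^{\imath nx}\}_{n=1}^{\infty}$ in $L^p([-\pi,\pi])$, as claimed.
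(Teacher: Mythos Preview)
Your outline is sound and aligns with the paper's treatment: the paper does not prove this lemma directly but defers to Lemmas~3 and~4 of \cite{GaGr21}, and your plan---reduction to indicators via $L^p$-subadditivity and unimodular frequency shifts, followed by a Menshov-type building block in which modulated Fej\'er-type kernels are stacked so that under the base signs they cancel in $L^1$ while under the flipped signs they accumulate to $c\chi_\Delta$ in $L^p$---is precisely the architecture used there. The only caveat is that the building block is described rather than executed; carrying out the explicit kernel construction (which in \cite{GaGr21} proceeds through a careful choice of Fej\'er-type sums with controlled $L^1$ tails) is the real work, but your identification of the mechanism and the obstacles is accurate.
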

This lemma is implied by Lemma 4 in \cite{GaGr21}. Although the statement of Lemma 4 in \cite{GaGr21} has $\epsilon=\delta$ in the language of our Definition \ref{UniApproxPropDef}, it is clear from Lemma 3 in \cite{GaGr21} that the same conclusions hold if $\epsilon$ and $\delta$ are independent.

Now Theorem \ref{TheoremAbstract} and Remark \ref{L1MRemark} imply the following important result.

\begin{corollary}\label{AlmostUnivCorr} There exists a function $U\in L^1([-\pi,\pi])$ which is almost universal for $L^p([-\pi,\pi])$, $p\in[0,1)$, as well as $M([-\pi,\pi])$, w.r.t. the trigonometric system.
\end{corollary}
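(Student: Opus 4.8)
The obstacle is that applying Theorem \ref{TheoremAbstract} to each space $L^p([-\pi,\pi])$ in isolation produces an almost universal function $U_p$ that depends on $p$, whereas the corollary demands a single $U\in L^1([-\pi,\pi])$ serving every $p\in[0,1)$ at once. The plan is therefore to manufacture one $U$ together with one sign sequence $\{\delta_n\}_{n=1}^\infty\subset\{\pm1\}$ for which the modified Fourier series $\sum_n\delta_nc_n(U)\varphi_n$ is universal in the usual sense in $L^p([-\pi,\pi])$ \emph{simultaneously} for all $p\in(0,1)$. Once this is done, Remark \ref{L1MRemark} propagates the conclusion: universality of the same series in $L^p$ for every $p\in(0,1)$ yields universality in each $L^q$, $q\in[0,p)$, and in $M([-\pi,\pi])$, so that letting $p\uparrow1$ covers the entire range $p\in[0,1)$ and the space $M([-\pi,\pi])$ required by the statement.

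The construction diagonalizes over the scale of exponents. I would fix an increasing sequence $q_k\uparrow1$ with $q_k\in(0,1)$, recalling that on the finite measure space $[-\pi,\pi]$ each $L^p$, $p\in(0,1)$, carries the translation invariant metric $d_p(f,g)=\int_{-\pi}^\pi|f-g|^p$. Then I would rerun the inductive scheme in the proof of Theorem \ref{TheoremAbstract} verbatim, making the single change that at the $k$-th step the universal approximation property is invoked in the space $L^{q_k}([-\pi,\pi])$, which is legitimate since Lemma \ref{TrigUniApproxPropLemma} supplies this property for \emph{every} exponent in $(0,1)$. Taking the dense sequence $\{f_m\}$ inside a common dense subset $\mathcal{D}\subset L^1$ for which Definition \ref{UniApproxPropDef} applies at every $q_k$ (for the trigonometric system one may use trigonometric polynomials with Gaussian rational coefficients), one arranges at step $k$ that $\|H_k\|_1\le2^{-k}$ and $d_{q_k}\bigl(f_k,\sum_{\ell=1}^kQ_\ell\bigr)<\tfrac1k$; the telescoping estimate (\ref{ApproxEq}) survives because each $d_{q_k}$ is translation invariant. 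The element $U=\sum_kH_k$ lies in $L^1$ since $\sum_k\|H_k\|_1<\infty$, Lemma \ref{FourierCoeffUniqueLemma} identifies $\alpha_n=c_n(U)$ exactly as in the original argument, and the block pattern $M_{k+1}=2^kM_k^*$ delivers the density condition $\limsup_n n^{-1}\#\{m\le n:\delta_m=1\}=1$ for the single sign sequence $\{\delta_n\}$.

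The crux is to convert step-$k$ smallness in $L^{q_k}$ into smallness in every fixed $L^p$. Fix $p\in(0,1)$ and an arbitrary target $f\in L^p$; since $\{f_m\}$ is dense in $L^1$ and hence in $L^p$, pick a subsequence with $f_{m_q}\to f$ in $L^p$. Writing $S_{m_q}=\sum_{k=1}^{m_q}Q_k$ and restricting to $q$ large enough that $q_{m_q}>p$, the elementary finite-measure interpolation inequality $d_p(g,h)\le(2\pi)^{1-p/q_{m_q}}\bigl(d_{q_{m_q}}(g,h)\bigr)^{p/q_{m_q}}$, combined with $d_{q_{m_q}}(f_{m_q},S_{m_q})<\tfrac1{m_q}$ from the step-$m_q$ construction, gives $d_p(f_{m_q},S_{m_q})\le(2\pi)^{1-p/q_{m_q}}(1/m_q)^{p/q_{m_q}}\to0$, since the exponent $p/q_{m_q}$ stays above $p>0$ while the base tends to $0$. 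Hence $d_p(f,S_{m_q})\le d_p(f,f_{m_q})+d_p(f_{m_q},S_{m_q})\to0$, and because $\sum_{n=1}^{M^*_{m_q}}\delta_nc_n(U)\varphi_n=S_{m_q}$, the modified partial sums converge to $f$ in $L^p$. This establishes usual-sense universality of the one series in every $L^p$, $p\in(0,1)$, hence almost universality of $U$ for each such $L^p$ with the common sign sequence.

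Finally I would invoke Remark \ref{L1MRemark} (itself resting on Lemma \ref{UsualUnivContLemma} through the chain of continuous dense embeddings $L^1\hookrightarrow L^p\hookrightarrow L^q\hookrightarrow M$) to push almost universality from the exponents $p\in(0,1)$ down to all $q\in[0,1)$ and to $M([-\pi,\pi])$, which completes the proof. The main difficulty I anticipate is essentially bookkeeping, namely verifying that a single dense sequence $\{f_m\}$ can serve simultaneously as the source of the approximation property at every $q_k$ and as a dense set in every target $L^p$; the genuine analytic content collapses to the nesting inequality for $L^p$-metrics on a finite measure space, and it is precisely this nesting that lets one sign sequence work uniformly across the whole scale $p\in(0,1)$.
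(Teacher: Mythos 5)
Your proposal is correct, but it proves something genuinely stronger than what the paper's own one-line deduction delivers, and the route differs in substance. The paper obtains Corollary \ref{AlmostUnivCorr} by fixing a single $p\in(0,1)$, feeding Lemma \ref{TrigUniApproxPropLemma} into Theorem \ref{TheoremAbstract} with $\mathbf{\Omega}=L^p([-\pi,\pi])$, and then invoking Remark \ref{L1MRemark} to descend from that one exponent to all $q\in[0,p)$ and to $M([-\pi,\pi])$; read literally, this produces a function $U_p$ for each fixed $p$, and the uniformity in $p\in[0,1)$ claimed by the corollary's statement is exactly the point the paper leaves implicit --- Remark \ref{L1MRemark} only propagates universality \emph{downward} from a fixed exponent, never upward. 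You identified this gap correctly and closed it: by diagonalizing the inductive construction of Theorem \ref{TheoremAbstract} over a scale $q_k\uparrow1$ (legitimate since Lemma \ref{TrigUniApproxPropLemma} holds for every exponent in $(0,1)$), and then converting step-$k$ smallness in $d_{q_k}$ into smallness in any fixed $d_p$ via the finite-measure H\"older nesting $d_p(g,h)\le(2\pi)^{1-p/q}\,d_q(g,h)^{p/q}$, you get a \emph{single} $U$ and even a \emph{single} sign sequence $\{\delta_n\}$ whose modified partial sums are dense in every $L^p$, $p\in(0,1)$, simultaneously --- strictly more than Definition \ref{FuncUniversalDef} requires, since the sign sequence there may depend on the target space. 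Your quantitative step is sound: with $q_{m_j}\le1$ one has $p/q_{m_j}\ge p$, so $(1/m_j)^{p/q_{m_j}}\le(1/m_j)^p\to0$ with the prefactor bounded. The one bookkeeping caveat you flag is real but harmless: the dense set $\mathcal{D}$ in Definition \ref{UniApproxPropDef} could a priori depend on the exponent, but it can be upgraded to all of $L^1$ by the standard argument (approximate $f\in L^1$ by $f'\in\mathcal{D}$ in the $L^1$ norm and use the continuity of the embedding together with the invariance of the metric to absorb the error into $\delta$), exactly as the paper does for the asymptotic property in Remark \ref{DL1Remark}; after that upgrade a single sequence of trigonometric polynomials with rational coefficients serves as $\{f_m\}$ at every step. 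In short: the paper's argument buys brevity at the cost of the uniformity in $p$; yours buys the genuine simultaneous statement at the cost of rerunning the induction with a moving exponent.
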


\begin{remark}\label{UnivNotExistRemark} The statement in Corollary \ref{AlmostUnivCorr} is in a certain sense exhaustive, since an integrable function universal for any $L^p([-\pi,\pi])$, $p\in(0,1)$, in the \textit{usual} sense with respect to the trigonometric system does \textit{not} exist.
\end{remark}
Indeed, if such a function $U$ existed then there would exist a subsequence of partial sums of the Fourier series of $U$ that converges to, say, $2U$ in $L^p([-\pi,\pi])$. But by Kolmogorov's theorem (see Theorem 2 in Section \S21 of Chapter VII in \cite{Bar61}), the Fourier series of $U$ converges to $U$ in $L^p([-\pi,\pi])$, leading to a contradiction.

To our knowledge, Corollary \ref{AlmostUnivCorr} appears to be the first ever almost universality result in the literature. Note that conditionally universal functions for $L^p$, $p\in(0,1)$, were constructed in \cite{GaGr21} w.r.t. the trigonometric system, and in \cite{Gri20} w.r.t the Walsh system.

As for the asymptotic approximation property, it was proven in Lemma 3 of \cite{GaGr18} that the trigonometric system satisfies Definition \ref{AsymApproxPropDef} in the form given by Lemma \ref{AsymApproxLemma} above for $\mathbb{G}=\{\pm1\}$. Thus, Theorem \ref{AsymTheorem1} implies the following.

\begin{corollary}\label{AsymCondUniCorr} There exists a function $U\in L^1([-\pi,\pi])$ which is asymptotically conditionally universal for $L^1([-\pi,\pi])$ w.r.t. the trigonometric system.
\end{corollary}

\begin{remark} The statement in Corollary \ref{AsymCondUniCorr} is again exhaustive in a certain sense, because, as mentioned before, there exists \textit{no} integrable function which is \textit{conditionally} universal for $L^1([-\pi,\pi])$ w.r.t. any minimal system.
\end{remark}

Finally, Theorem \ref{AsymTheorem2} implies the following

\begin{corollary}\label{AsymCondUniGenCorr}  For every  $\epsilon>0$  there exists a set  $E$ with $|E^\complement|<\epsilon$, such that for every function $f\in L^1([-\pi,\pi])$  one can find a function $U\in L^1([-\pi,\pi])$ with $U|_E=f|_E$, which is asymptotically conditionally universal for $L^1([-\pi,\pi])$ w.r.t. the trigonometric system.
\end{corollary}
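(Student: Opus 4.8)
The plan is to deduce Corollary \ref{AsymCondUniGenCorr} directly from Theorem \ref{AsymTheorem2}, applied to the trigonometric system. The corollary is essentially a reformulation of Theorem \ref{AsymTheorem2} in which the quantifier order is inverted: instead of fixing $g$ first and then producing a nested family $\{E_m\}$ and functions $V_m$, one wants a \emph{single} large set $E$ on which \emph{every} $f$ can be matched by some asymptotically conditionally universal function. The bridge between the two statements is the fact, established in the discussion preceding Corollary \ref{AsymCondUniCorr}, that the trigonometric system satisfies the asymptotic approximation property of Definition \ref{AsymApproxPropDef} (via Lemma \ref{AsymApproxLemma} and Lemma 3 of \cite{GaGr18}) with $\mathbb{G}=\{\pm1\}$. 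Hence all hypotheses of Theorem \ref{AsymTheorem2} are in force for $\mathcal{M}=[-\pi,\pi]$.

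First I would invoke Theorem \ref{AsymTheorem2} to obtain a fixed function $U\in L^1([-\pi,\pi])$, asymptotically conditionally universal for $L^1([-\pi,\pi])$, together with the nested sequence $E_1\subset E_2\subset\ldots$ satisfying $|E_m^\complement|\to0$. Given $\epsilon>0$ from the corollary, I would then simply choose $m$ large enough that $|E_m^\complement|<\epsilon$ and set $E\doteq E_m$; this choice of $E$ depends only on $\epsilon$ (through the fixed family $\{E_m\}$), not on any particular $f$, which is exactly what the corollary demands. Next, given an arbitrary $f\in L^1([-\pi,\pi])$, I would apply the conclusion of Theorem \ref{AsymTheorem2} with $g\leftarrow f$ and this same index $m$ to produce $V_m\in L^1([-\pi,\pi])$ with $V_m|_{E_m}=f|_{E_m}$ and $c_n(V_m)=\varepsilon_n c_n(U)$ for some sign sequence $\{\varepsilon_n\}\subset\mathbb{G}$. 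The desired universal function is then $U\leftarrow V_m$: its restriction to $E=E_m$ agrees with $f$, and it is asymptotically conditionally universal because its Fourier coefficients are merely a sign-modification of those of the universal $U$, which is precisely the content of asymptotic conditional universality in Definition \ref{FuncAsymUniversalDef}.

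The only point requiring a word of care is verifying that $V_m$ inherits asymptotic conditional universality from $U$. Since $c_n(V_m)=\varepsilon_n c_n(U)$ with $\varepsilon_n\in\{\pm1\}$, and $U$ is asymptotically conditionally universal with some sign sequence $\{\delta_n\}$ and witnessing sets $\{F_{m'}\}$, the phase-modified partial sums $\sum_{n=1}^{N_q}\delta_n\varepsilon_n c_n(V_m)\varphi_n=\sum_{n=1}^{N_q}\delta_n c_n(U)\varphi_n$ coincide with the universal partial sums of $U$ after absorbing $\varepsilon_n$ into a relabelled sign sequence $\delta_n'\doteq\delta_n\varepsilon_n\in\mathbb{G}$. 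Thus $V_m$ is asymptotically conditionally universal with the sign sequence $\{\delta_n'\}$ and the same sets $\{F_{m'}\}$. \textbf{I expect no genuine obstacle here}; the substantive work is entirely contained in Theorem \ref{AsymTheorem2}, and the corollary is a clean unpacking of its statement specialized to the trigonometric system, with the relabelling of signs being the only nontrivial bookkeeping step.
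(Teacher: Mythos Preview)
Your proposal is correct and follows exactly the route the paper intends: the paper simply states that Corollary \ref{AsymCondUniGenCorr} is implied by Theorem \ref{AsymTheorem2} (together with the asymptotic approximation property of the trigonometric system established via Lemma 3 of \cite{GaGr18}), and you have supplied the natural unpacking of that implication. The one genuinely nontrivial detail you handled---that $V_m$ inherits asymptotic conditional universality from $U$ because $\mathbb{G}=\{\pm1\}$ is closed under multiplication, so $\delta_n'=\delta_n\varepsilon_n\in\mathbb{G}$ and $\delta_n'c_n(V_m)=\delta_n c_n(U)$---is precisely the missing link, and your treatment of it is correct.
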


Corollary \ref{AsymCondUniCorr} and Corollary \ref{AsymCondUniGenCorr} appear to be new for the trigonometric system. For the Walsh system asymptotically conditionally universal functions were constructed in \cite{Gri20}. In fact, one can construct an integrable function of two variables which is asymptotically conditionally universal with respect to the double Walsh system. For other systems the question of existence of asymptotically conditionally universal functions remains open. In particular, it is unknown to us whether an integrable function exists on the 2-sphere which is asymptotically conditionally universal with respect to the spherical harmonics. In this respect it is worth mentioning that the method of proof of Theorem 1 in our paper \cite{AGR21} allows to construct a series in the spherical harmonics, such that the sequence of its partial sums is dense in the space of integrable functions on a subset of the sphere with a compliment of arbitrarily small measure. If we could find a suitable sequence of signs such that the above constructed series, after the change the signs of its terms, becomes the Fourier series of an integrable function, that function would become asymptotically conditionally universal with respect to the spherical harmonics.

\section*{Acknowledgements}

This work was supported by the RA MSECS Committee of Science, in
the frames of the research project \ 21AG-1A066. Z.A. and M.R. were supported by the FWO Odysseus 1 grant G.0H94.18N: Analysis and Partial Differential Equations and by the Methusalem programme of the Ghent University Special Research Fund (BOF) (Grant number 01M01021). M.R. was also supported by the EPSRC grants EP/R003025/2 and EP/V005529/1.


\begin{thebibliography}{9}

\bibitem{AGR21}
Z. Avetisyan, M. Grigoryan, M. Ruzhansky.
\textit{Approximations in $L^1$ with convergent Fourier series}.
Mathematische Zeitschrift 299, 2021, Pages 1907-1927.

\bibitem{Bar61}
N. Bary.
\textit{Trigonometric Series} (in Russian).
Gos. Izdat. Fiz. Mat. Lit., 1961.

\bibitem{BGNP08}
F. Bayart, K.-G. Gro{\ss}e-Erdmann, V. Nestoridis and C. Papadimitropoulos.
\textit{Abstract theory of universal series and applications.}
Proc. London Math Soc. 96(2), 2008, Pages 417-463.

\bibitem{BeBu02}
R. Beattie and H.-P. Butzmann.
\textit{Convergence Structures and Applications to Functional Analysis}.
Springer Dordrecht, 2002.

\bibitem{Bir29}
G. Birkhoff.
\textit{D\'emonstration d'un th\'eor\'eme \'el\'ementaire sur les fonctions enti\'eres}.
C. R. Acad. Sci. Paris 189, 1929, Pages 473-475.

\bibitem{EpMu14}
S. Episkoposian and J. M\"uller.
\textit{Universality properties of Walsh–Fourier series}.
Monatshefte f\"ur Mathematik 175, 1973, Pages 511–518.

\bibitem{Fek14}
M. Fekete.
\textit{Untersuchungen \"uber absolut summable Reihen, mit Anwendungen auf Dirichletsche und Fouriersche Reihen} (in Hungarian).
Math. \'es term\'esz. 32, 1914, Pages 389–425.

\bibitem{GaGr21}
L. Galoyan and M. Grigoryan.
\textit{Functions that are universal with respect to the trigonometric system}.
Izv. Math. 85(2), 2021, Pages 3-21.

\bibitem{GaGr18}
L. Galoyan and M. Grigoryan.
\textit{On the universal functions}.
Journal of Approx. Theory. 225, 2018, Pages 191-208.

\bibitem{GeNa99}
G. Gevorgyan and K. Navasardyan.
\textit{On Walsh series with monotone coefficients}.
Izv. Math. 63(1), 1999, Pages 37-55.

\bibitem{Gri99}
M. Grigorian.
\textit{On the representation of functions by orthogonal series in weighted spaces}.
Studia Math. 134(3), 1999, Pages 207-216.

\bibitem{GrSa16}
M. Grigoryan and A. Sargsyan.
\textit{On the universal function for the class $L^{p}[0,1]$, $p\in(0,1)$}.
Journal of Func. Anal. 270(8), 2016, Pages 3111-3133.

\bibitem{Gri17}
M. Grigoryan.
\textit{On the universal and strong property related to Fourier-Walsh series}.
Banach Journal of Math. Anal. 11(3), 2017, Pages 698-712.

\bibitem{Gri20}
M. Grigoryan.
\textit{Functions with universal Fourier-Walsh series}.
Sb. Math. 211(6), 2020, Pages 850-874.

\bibitem{Gri20a}
M. Grigoryan.
\textit{Universal Fourier Series.}
Math. Notes 108(2), 2020, Pages 282-285.

\bibitem{Gri22}
M. Grigoryan.
\textit{On Universal Fourier Series in the Walsh System}.
Sib. Math. J 63(2), 2022, Pages 868-882.

\bibitem{GrSa17}
M. Grigoryan, A. Sargsyan.
\textit{Universal function for a weighted space $\mathit{L_{\mu}^{1}[0,1]}$}.
Positivity 21(2), 2017, Pages 1425-1451.

\bibitem{Gro87}
K.-G. Gro{\ss}e-Erdmann.
\textit{Holomorphe Monster und Universelle Funktionen}.
Mitt. Math. Semin. Giessen. 176, 1987, Pages 1-84.

\bibitem{Iva90}
V. Ivanov.
\textit{Representation of functions by series in metric symmetric spaces without linear functionals}
Proc. Steklov Inst. Math. 189, 1990, Pages 37-85.

\bibitem{Kro91}
V. Krotov.
\textit{Smoothness of the universal Marcinkiewicz functions and universal trigonometric series}.
Izv. Uni. Math. 8, 1991, Pages 26-31.

\bibitem{Kol25}
A. Kolmogoroff.
\textit{Sur les fonctions harmoniques conjug\'ees et les s\'eries de Fourier}.
Fund. Math. 7(1), 1925, Pages 24-29.

\bibitem{Luz12}
N. Luzin.
\textit{On the main theorem of integral calculus} (in Russian).
Sb. Math. 28(2), 1912, Pages 262-294.

\bibitem{Luh86}
W. Luh.
\textit{Universal approximation properties of overconvergent power series on open sets}.
Analysis 6, 1986, Pages 191-207.

\bibitem{Mac52}
G. MacLane.
\textit{Sequences of derivatives and normal families}.
J. Anal. Math. 2, 1952, Pages 72-28.

\bibitem{Mar35}
J. Marcinkiewicz.
\textit{Sur les nombres d\'eriv\'es}.
Fund. Math. 24, 1935, Pages 305-308.

\bibitem{MeNe01}
A. Melas and V. Nestoridis.
\textit{On various types of universal Taylor series}.
Compl. Var. 44, 2001, Pages 245-258.

\bibitem{Men64}
D. Menshov.
\textit{On universal sequences of functions} (in Russian).
Sb. Math. 65(2), 1964, Pages 272-312.

\bibitem{Men42}
D. Menchoff.
\textit{Sur la convergence uniforme des s\'eries de Fourier}.
Rec. Math. [Math. Sb.] N. S. 11(53), 1942, Pages 67-96.

\bibitem{Nes96}
V. Nestoridis.
\textit{Universal Taylor series}.
Ann. Inst. Fourier 46, 1996, Pages 1293-1306.

\bibitem{Tal60}
A. Talalian.
\textit{On the universal series with respect to rearrangements} (in Russian).
Izv. Akad. Nauk SSSR, Ser. Math. 24, 1960, Pages 567-604.

\end{thebibliography}
\end{document}